   \def\MR#1{}
\DeclareMathOperator{\GL}{GL}
\DeclareMathOperator{\GU}{GU}
\DeclareMathOperator{\GSp}{GSp}
\DeclareMathOperator{\ch}{ch}
\DeclareMathOperator{\diag}{diag}
\DeclareMathOperator{\Ad}{Ad}
\DeclareMathOperator{\Adm}{Adm}
\DeclareMathOperator{\pr}{pr}
\DeclareMathOperator{\pfn}{pfn}
\DeclareMathOperator{\supp}{supp}
\DeclareMathOperator{\LP}{LP}
\newcommand\F{\mathbb{F}}
\newcommand\aFq{\overline{\mathbb F}_q}
\newcommand\cB{\mathcal B}
\newcommand\cO{\mathcal O}
\newcommand\Gm{\mathbb G_m}
\newcommand\Q{\mathbb Q}
\newcommand\A{\mathbb A}
\newcommand\G{\mathbb G}
\newcommand\J{\mathbb J}
\newcommand\Z{\mathbb Z}
\newcommand\tW{\tilde W}
\newcommand\tw{\tilde w}
\newcommand\SW{{^S\tilde W}}
\newcommand\SAdm{{^S\mathrm{Adm}}}
\newcommand\tS{\tilde S}
\newcommand\inv{\mathrm{inv}}
\newcommand\vp{\varpi}
\newcommand\Y{X_*(T)}
\newcommand\la{\langle}
\newcommand\ra{\rangle}
\newcommand\pc{\preceq}
\theoremstyle{definition}
\newtheorem{theo}{Theorem}[section]
\newtheorem{prop}[theo]{Proposition}
\newtheorem{lemm}[theo]{Lemma}
\newtheorem{rema}[theo]{Remark}
\newtheorem{thm}{Theorem}[section]
\begin{document}
\title{On the supersingular locus of the Siegel modular variety of genus 3 or 4}
\author{Ryosuke Shimada\footnote{Department of Mathematics/Hakubi center, Kyoto University,
Yoshida-honmachi, Sakyo-ku, Kyoto 606-8501, Japan
\newline \hspace{0.55cm} email: \texttt{rshimada@math.kyoto-u.ac.jp}}\ and\ Teppei Takamatsu\footnote{Department of Mathematics/Hakubi center, Kyoto University,
Yoshida-honmachi, Sakyo-ku, Kyoto 606-8501, Japan
\newline \hspace{0.55cm} email:\texttt{teppeitakamatsu.math@gmail.com}
\newline \hspace{0.55cm} 2010 Mathematics Subject Classification. Primary: 14G35, 20G25.}}
\date{}
\maketitle

\begin{abstract}
We study the supersingular locus of the Siegel modular variety of genus 3 or 4.
More concretely, we decompose the supersingular locus into a disjoint union of the product of a Deligne-Lusztig variety of Coxeter type and a finite-dimensional affine space after taking perfection.
\end{abstract}

\section{Introduction}
\label{introduction}
Shimura varieties have been used, with great success, towards applications in number theory.
Many of these applications are based on the study of integral models and their reductions.
It is known that in some cases, the supersingular (or basic) locus of the reduction of a Shimura variety admits a simple description.
For example, Vollaard-Wedhorn \cite{VW11} described the supersingular locus of the Shimura variety of $\GU(1,n-1)$ at an inert prime as a union of classical Deligne-Lusztig varieties.
Simple descriptions such as \cite{VW11} have been applied towards the Kudla-Rapoport program \cite{KR99}, \cite{KR11}, Zhang's Arithmetic Fundamental Lemma \cite{Zhang12} and the Tate conjecture for certain Shimura varieties \cite{TX19}, \cite{HTX17}. 

Motivated by \cite{VW11}, G\"{o}rtz-He \cite{GH15} classified the cases where the supersingular locus is naturally a union of classical Deligne-Lusztig varieties of Coxeter type.
These are called the cases of Coxeter type.
The index set of the corresponding stratification and the closure relations between strata can be described in terms of the Bruhat-Tits building of a certain inner form of the underlying group.
Thus this stratification is called the {\it Bruhat-Tits stratification}.
While for the Siegel case for $\GSp_{2n}$, which we are interested in, such a description is possible only when $n\le 2$.
The case $n=2$ was studied intensively by Katsura-Oort \cite{KO87} and Kaiser \cite{Kaiser97}.
The results were later applied to \cite{KR99} by Kudla-Rapoport.
Richartz \cite{Richartz98} studied the case $n=3$ in analogy to the case $n=2$.
In this paper, we study the case $n=3$ or $4$ using more advanced techniques.
The same techniques are also used in \cite{Shimada6} and \cite{Trentin23} to describe the geometry of the basic locus of other Shimura varieties.

The study of the perfection of the supersingular locus is essentially reduced to a study of an affine Deligne-Lusztig variety via the Rapoport-Zink uniformization (see \cite[\S4.1]{Wang21} for example).
Thus the objective of this paper is to find an explicit description of the affine Deligne-Lusztig variety related to the Siegel modular variety of genus 3 or 4 with hyperspecial level structure.

Let $F$ be a non-archimedean local field with finite residue field $\F_q$ of prime characteristic $p$, and let $L$ be the completion of the maximal unramified extension of $F$.
Let $\sigma$ denote the Frobenius automorphism of $L/F$.
Further, we write $\cO$ (resp.\ $\cO_F$) for the valuation ring of $L$ (resp.\ $F$).
Finally, we denote by $\vp$ a uniformizer of $F$ (and $L$) and by $v_L$ the valuation of $L$ such that $v_L(\vp)=1$.

Let $\mu$ be the cocharacter of $\GSp_{2n}$ corresponding to $z\mapsto\diag(z,\ldots,z,1,\ldots,1)$ in which $z$ and $1$ are repeated $n$ times.
Set $\tau={\begin{pmatrix}
0 & \vp^{(1,\ldots,1)} \\
1_n & 0\\
\end{pmatrix}}\in \GSp_{2n}(L)$.
Then its $\sigma$-conjugacy class in $\GSp_{2n}(L)$ is the basic class (cf.\ \cite[\S 1.2]{Wang21}).
Let $X_\mu(\tau)$ denote the affine Deligne-Lusztig variety for $\mu$ and $\tau$.
Let us denote by $\J$ the $\sigma$-centralizer of $\tau$ in $\GSp_{2n}$.
Then $\J$ is an inner form of $\GSp_{2n}$.

\begin{thm}[Theorem \ref{main theo}]
\label{main thm}
Let $n=3$.
The variety $X_\mu(\tau)$ is universally homeomorphic to a union of the product of a Deligne-Lusztig variety of Coxeter type and a finite-dimensional affine space.
The index set of this stratification and the closure relations between strata can be described in terms of the rational Bruhat-Tits building of $\J$.
Moreover, this stratification coincides with the $\J$-stratification.
\end{thm}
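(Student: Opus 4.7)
The plan is to carry out the analysis at Iwahori level and transfer the result to the hyperspecial setting via the projection $\pi\colon \AFl \to \AGr$. Writing $\pi^{-1}(X_\mu(\tau)) = \bigsqcup_{w \in \Adm(\mu)} X_w(\tau)$, the first step is to enumerate $\Adm(\mu)$ explicitly for $n=3$, which is a finite and tractable computation. For each $w$, I would apply He's reduction method for affine Deligne-Lusztig varieties: via iterated cyclic conjugation by affine simple reflections, $X_w(\tau)$ is realized as an iterated affine-space bundle over $X_{w'}(\tau)$ for a cyclic shift representative $w'$ that is $\sigma$-straight or Coxeter-type in a proper standard parabolic subgroup of the affine Weyl group $\tW$.

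Second, by direct computation for each such $w$, I would show that the resulting Iwahori-level piece is (after perfection) universally homeomorphic to the product of a classical Deligne-Lusztig variety of Coxeter type in an appropriate Levi subgroup of $\GSp_6$ with a finite-dimensional affine space. Combining these pieces via $\pi$ produces the required stratification of $X_\mu(\tau)$. The index set is then reparametrized by facets in the rational Bruhat-Tits building of $\J$: the $\J(F)$-action on $X_\mu(\tau)$ permutes the pieces, the orbits correspond bijectively to facets in the building (their stabilizers being parahoric subgroups of $\J(F)$), and closure relations between strata match face relations in the building.

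Finally, to check that this agrees with the $\J$-stratification, I would verify that on each stratum the relative position invariant $g \mapsto \inv(x, gx)$, for $g$ ranging over a sufficiently large bounded subset of $\J(F)$, is constant, and that different strata yield distinct invariants. The main obstacle is the second step: since the pair $(\GSp_6,\mu)$ lies beyond the Coxeter type classification of G\"{o}rtz--He, the expected product-with-affine-space structure is not automatic. Ensuring that He's reduction, applied to each non-straight element of $\Adm(\mu)$, produces a \emph{trivial} affine bundle over a Deligne-Lusztig variety of Coxeter type, rather than a more general iterated fibration or a fibration over a DL variety of non-Coxeter type, requires delicate explicit analysis of the reduction sequence for each relevant $w$, and identification of the correct Levi subgroup on which the Coxeter structure appears.
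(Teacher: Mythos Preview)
Your outline is broadly aligned with the paper's strategy, but two concrete ingredients are missing and without them the argument does not go through.

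First, you propose to run He's reduction on every $w\in\Adm(\mu)$, but you never explain how you will rule out the $\G_m$-branches or the empty strata. The paper works with $\SAdm(\mu)$ (eight elements here) and uses Schremmer's non-emptiness criterion via length positive elements to show that five of them give $X_w(\tau)=\emptyset$, leaving only $\{s_0s_1s_0\tau,\,s_0\tau,\,\tau\}$. The same criterion is used again inside the reduction of $s_0s_1s_0\tau$: one shows $X_{s_1s_0\tau}(\tau)=\emptyset$, so the Deligne--Lusztig reduction step produces only an $\A^1$-bundle, not a $\G_m$-piece. You flag the triviality issue in your last paragraph, but the actual mechanism---proving the ``open'' branch of the reduction is empty via the length-positive criterion---is absent from your plan. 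The paper also needs a separate argument (Schubert-cell containment of $Y(s_1\tau)$ plus triviality over Schubert cells) to see that the resulting $\A^1$-bundle is globally trivial, and an injectivity argument to show $\pi$ restricts to a universal homeomorphism on $X_{s_0s_1s_0\tau}(\tau)$; neither appears in your sketch.

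Second, your claim that ``closure relations between strata match face relations in the building'' is the hardest part and you give no mechanism for it. The paper's proof has two halves: the non-closure direction is handled by explicit Cartan-decomposition bounds (showing strata lie in incompatible $K$-double cosets), but the closure direction relies essentially on the equidimensionality of $X_\mu(\tau)$ established by Hartl--Viehmann and Takaya. Equidimensionality forces the closure of a top-dimensional stratum to meet \emph{some} lower stratum, and then the non-closure bounds pin down which one. Without invoking equidimensionality (or a substitute), there is no way to produce the required inclusions $j\pi(Y(s_0\tau))\subset\overline{\pi(Y(s_0s_1s_0\tau))}$.
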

The $\J$-stratification, introduced by Chen-Viehmann \cite{CV18}, is a stratification which coincides with the Bruhat-Tits stratification in the case of Coxeter type (see \S\ref{J-str}).
Thus Theorem \ref{main thm} tells us that there exists a natural generalization of the Bruhat-Tits stratification in the case of genus 3.
We call these cases ``positive Coxeter type''.
The cases of positive Coxeter type for $\GL_n$ were studied by the first author in \cite{Shimada5}.

In the case $n=4$, there also exists an analogous stratification.
However, this case is not of positive Coxeter type (cf.\ Remark \ref{not positive}) and we cannot expect a nice property such as the closure relation in Theorem \ref{main thm}.
\begin{thm}[Theorem \ref{main theo2}]
\label{main thm2}
Let $n=4$.
The variety $X_\mu(\tau)$ is universally homeomorphic to a union of the product of a Deligne-Lusztig variety and a finite-dimensional affine space.
The index set of this stratification can be described in terms of the rational Bruhat-Tits building of $\J$.
\end{thm}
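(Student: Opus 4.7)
The approach parallels the genus-$3$ case of Theorem~\ref{main thm}, but without the simplifications afforded by Coxeter type. The plan is to describe $X_\mu(\tau)$ via a stratification indexed by the rational Bruhat-Tits building of $\J$ and to realize each stratum as an affine-space bundle over a Deligne-Lusztig variety sitting inside a classical (symplectic-type) flag variety.

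First, I would set up the combinatorial framework. Via the Rapoport-Zink uniformization the group $\J(F)$ acts on $X_\mu(\tau)$, and simplices of the rational Bruhat-Tits building of $\J$ parametrize $\J(F)$-orbits of vertices / lattice chains of the appropriate type. The stratification is cut out by fixing the relative position of the universal $\cO$-lattice chain with respect to the standard one; this produces the candidate index set asserted in the theorem. Verifying that this is well-defined on $|X_\mu(\tau)|$ and that the stratum closures cover $X_\mu(\tau)$ is a standard lattice-chain computation once $\tau$ has been written out explicitly.

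Next I would analyze an individual stratum. After translating the $\sigma$-conjugation equations into conditions on lattices, one follows the strategy of \cite{VW11} and \cite{Shimada5}: project the stratum to a finite-dimensional partial flag variety attached to the reductive quotient at the chosen simplex, identify the image with a Deligne-Lusztig variety inside a symplectic-type flag variety (the Weyl-group datum coming from the induced action of Frobenius on that reductive quotient), and check that the fibers of the projection are affine spaces of a predictable dimension. Summing, over the strata, the quantity $(\dim\text{DL variety}) + (\dim\text{affine fiber})$ should recover $\dim X_\mu(\tau)$; this dimension count is the crucial sanity check.

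The main obstacle will be the case analysis. For $n=4$ the Frobenius-twisted conjugation action on each reductive quotient is more involved than in the genus-$3$ case, and the resulting Weyl-group element is not of Coxeter type (cf.\ Remark~\ref{not positive}). I therefore expect to enumerate the $\J(F)$-conjugacy classes of relevant lattice chains and, for each class, verify individually that the projection really does trivialize with affine-space fibers and that its image is the asserted Deligne-Lusztig variety. Because positive Coxeter type fails, no uniform description of the closure relations should be expected, which is why Theorem~\ref{main thm2} asserts only the existence of the decomposition and its indexing by the building, omitting the $\J$-stratification and closure-relation statements that appear in Theorem~\ref{main thm}.
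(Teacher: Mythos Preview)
Your plan follows a Vollaard--Wedhorn-style lattice-chain approach, which is genuinely different from what the paper does and, as written, has a gap at the crucial step.

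The paper never touches lattices directly. It works entirely in the Iwahori--Weyl group, starting from the Ekedahl--Oort stratification $X_\mu(\tau)=\bigsqcup_{w\in\SAdm(\mu)_0}\pi(X_w(\tau))$. First it computes $\SAdm(\mu)_0$ explicitly (six elements) using Schremmer's non-emptiness criterion (Theorem~\ref{empty}). For the four $w$ with $W_{\supp_\sigma(w)}$ finite, Proposition~\ref{spherical} already writes $X_w(\tau)$ as a $\J$-indexed union of classical Deligne--Lusztig varieties. For the two remaining elements $s_0s_1s_2s_0s_1s_0\tau$ and $s_0s_1s_2s_0\tau$, the paper runs the Deligne--Lusztig reduction method (Proposition~\ref{DL method prop}) along an explicit chain of simple reflections, verifying at each step that the $\G_m$-branch lands in an \emph{empty} affine Deligne--Lusztig variety (again by Theorem~\ref{empty}), so only the $\A^1$-branch survives. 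That is exactly how the affine-space factors arise. The passage from $X_w(\tau)$ to $\pi(X_w(\tau))$ is then a short argument combining Lemma~\ref{closed} with an injectivity check.

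Your proposal invokes none of these three tools (EO stratification, non-emptiness criterion, DL reduction). The step you label ``check that the fibers of the projection are affine spaces of a predictable dimension'' is the entire content of the theorem, and in the paper it is \emph{not} established by a direct geometric or lattice computation: it comes from proving that specific auxiliary Iwahori-level varieties are empty, which is what kills the would-be $\G_m$-factors. Without that mechanism you have no way to exclude $\G_m$-fibers or worse, and the VW-style argument you cite is tailored to unitary groups; there is no ready-made analogue for $\GSp_8$. Carrying out your case analysis honestly would amount to reproving Lemmas~\ref{genus 4 empty} and~\ref{genus 4 Iwahori} by hand.
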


For $n\geq 5$, there is no simple description of $X_\mu(\tau)$ such as Theorem \ref{main thm} and Theorem \ref{main thm2} (cf.\ \S5).

The strategy of the proof is as follows:
We first recall the Ekedahl-Oort stratification of $X_\mu(\tau)$ (see \S\ref{ADLV}).
In fact, the stratifications in the main results are refinement of this stratification.
By the non-emptiness criterion of the Iwahori-level affine Deligne-Lusztig variety (see \S\ref{LP}), we may and do eliminate empty Ekedahl-Oort strata.
Then we decompose the remaining non-empty Ekedahl-Oort strata combining the case with spherical $\sigma$-support (Proposition \ref{spherical}) and the Deligne-Lusztig method (see \S\ref{DL method}).
In the case $n=3$, we prove that the resulting stratification coincides with the $\J$-stratification in a similar way to the case of Coxeter type.
The key to the proof of the closure relation is the equidimensionality of $X_\mu(\tau)$, which was established in \cite{HV12} and \cite{Takaya22}.

In a future work, we want to pin down the precise scheme-theoretic structure of the above results, not just up to perfection.
Such results would contribute to an intersection problem involving certain Shimura varieties.

\textbf{Acknowledgments:}
The authors wish to express their gratitude to Naoki Imai for valuable discussion.
The authors would also like to thank Wei Zhang for suggesting potential applications.
Moreover, the authors thank the referee for valuable suggestions.
The first author was supported by the WINGS-FMSP program at the Graduate School of Mathematical Sciences, the University of Tokyo. 
The first author was also supported by JSPS KAKENHI Grant number JP21J22427.
The second author was supported by JSPS KAKENHI Grant number JP22KJ1780.

\section{Preliminaries}
\label{preliminaries}
Keep the notation in \S\ref{introduction}.
From now on, we sometimes drop the adjective ``perfect'' for notational convenience, although we need to work with perfect schemes in most statements in the mixed characteristic setting.
Also, $\cong$ always means a universal homeomorphism.
\subsection{Notation}
\label{notation}
Let $G$ be a split connected reductive group over $F$ and let $T$ be a split maximal torus of it.
Let $B$ be a Borel subgroup of $G$ containing $T$. 
Let $\Phi=\Phi(G,T)$ denote the set of roots of $T$ in $G$.
We denote by $\Phi_+$ (resp.\ $\Phi_-$) the set of positive (resp.\ negative) roots distinguished by $B$.
Let $\Delta$ be the set of simple roots.
Let $X_*(T)$ be the set of cocharacters, and let $X_*(T)_+$ be the set of dominant cocharacters.
For $\mu,\mu'\in X_*(T)$ (resp.\ $X_*(T)_{\Q}$), we write $\mu'\pc \mu$ if $\mu-\mu'$ is a non-negative integral (resp.\ rational) linear combination of positive coroots.
For a cocharacter $\mu\in X_*(T)$, let $\vp^{\mu}$ be the image of $\vp\in \mathbb G_m(F)$ under the homomorphism $\mu\colon\mathbb G_m\rightarrow T$.

Let $B(G)$ denote the set of $\sigma$-conjugacy classes of $G(L)$. 
Thanks to Kottwitz \cite{Kottwitz85}, a $\sigma$-conjugacy class $[b]\in B(G)$ is uniquely determined by two invariants: the Kottwitz point $\kappa(b)\in \pi_1(G)$ and the Newton point $\nu_b\in X_*(T)_{\Q,+}$.
Set $B(G,\mu)=\{[b]\in B(G)\mid \kappa(b)=\kappa(\vp^\mu), \nu_b\pc \mu\}$.

The Iwahori-Weyl group $\tW$ is defined as the quotient $N_{G(L)}T(L)/T(\cO)$.
This can be identified with the semi-direct product $W_0\ltimes X_{*}(T)$, where $W_0$ is the finite Weyl group of $G$.
We denote the projection $\tW\rightarrow W_0$ by $p$.
We have a length function $\ell\colon \tW\rightarrow \Z_{\geq 0}$ given as
$$\ell(w_0\vp^{\lambda})=\sum_{\alpha\in \Phi_+, w_0\alpha\in \Phi_-}|\langle \alpha, \lambda\rangle+1|+\sum_{\alpha\in \Phi_+, w_0\alpha\in \Phi_+}|\langle \alpha, \lambda\rangle|,$$
where $w_0\in W_0$ and $\lambda\in \Y$.
For $w,w'\in \tW$ and $s\in \tS$, we write $w\xrightarrow{s} w'$ if $w'=sws$ and $\ell(w')\le \ell(w)$.

Let $S\subset W_0$ denote the subset of simple reflections, and let $\tS\subset \tW$ denote the subset of simple affine reflections.
We often identify $\Delta$ and $S$.
The affine Weyl group $W_a$ is the subgroup of $\tW$ generated by $\tS$.
Then we can write the Iwahori-Weyl group as a semi-direct product $\tW=W_a\rtimes \Omega$, where $\Omega\subset \tW$ is the subgroup of length $0$ elements.
Moreover, $(W_a, \tS)$ is a Coxeter system.
We denote by $\le$ the Bruhat order on $\tW$ (see \cite[Subsection 1.8]{KR00} for example).
For any $J\subseteq \tS$, let $^J\tW$ be the set of minimal length representatives for the cosets in $W_J\backslash \tW$, where $W_J$ denotes the subgroup of $\tW$ generated by $J$.

For $w\in W_a$, we denote by $\supp(w)\subseteq \tS$ the set of simple affine reflections occurring in every (equivalently, some) reduced expression of $w$.
Note that $\tau\in \Omega$ acts on $\tS$ by conjugation.
We define the $\sigma$-support $\supp_\sigma(w\tau)$ of $w\tau$ as the smallest $\tau$-stable subset of $\tS$ containing $\supp(w)$ (we should consider $\tau\sigma$-action in general, but the action of $\sigma$ is trivial in our case because $G$ is split over $F$).
We call an element $w\tau\in W_a\tau$ a $\sigma$-Coxeter element if exactly one simple reflection from each $\tau$-orbit on $\supp_\sigma(w\tau)$ occurs in every (equivalently, any) reduced expression of $w$.

For $\tau\in \Omega$, let $\Sigma$ be an orbit of $\tau$ on $\tS$ and suppose that $W_\Sigma$ is finite.
We denote by $s_\Sigma$ the unique longest element of $W_\Sigma$.
Then $s_\Sigma$ is fixed by $\tau$.
The fixed point group $W_a^{\tau}\coloneqq \{w\in W_a\mid \tau w\tau^{-1}=w\}$ is the Weyl group whose simple reflections are the elements $s_\Sigma$ such that $\Sigma$ is a $\tau$-orbit on $\tS$ with $W_\Sigma$ finite.
For any reduced decomposition $w=s_{\Sigma_1}\cdots s_{\Sigma_{r}}$ as an element of $W^{\tau}_a$, we have $$\ell(w)=\ell(s_{\Sigma_1})+\cdots +\ell(s_{\Sigma_r}).$$
See \cite{Steinberg68} or \cite[\S2]{KR00} for these facts.

Set $K=G(\cO)$.
For $\alpha\in \Phi$, let $U_\alpha\subseteq G$ denote the corresponding root subgroup.
We also set $$I=T(\cO)\prod_{\alpha\in \Phi_+}U_{\alpha}(\vp\cO)\prod_{\beta\in \Phi_-}U_{\beta}(\cO)\subseteq K,$$
which is called the standard Iwahori subgroup associated with the triple $T\subset B\subset G$.

In the case $G=\GL_n$, we will use the following description.
Let $T$ be the torus of diagonal matrices, and we choose the subgroup of upper triangular matrices $B$ as a Borel subgroup. 
Let $\chi_{ij}$ be the character $T\rightarrow \Gm$ defined by $\mathrm{diag}(t_1,t_2,\ldots, t_n)\mapsto t_i{t_j}^{-1}$.
Then we have $\Phi=\{\chi_{ij}\mid i\neq j\}$, $\Phi_+=\{\chi_{ij}\mid i< j\}$, $\Phi_-=\{\chi_{ij}\mid i> j\}$ and $\Delta=\{\chi_{i,i+1}\mid 1\le i <n\}$.
Through the natural isomorphism $X_*(T)\cong \Z^n$, ${X_*(T)}_+$ can be identified with the set $\{(m_1,\cdots, m_n)\in \Z^n\mid m_1\geq \cdots \geq m_n\}$.
The finite Weyl group is the symmetric group of degree $n$.
Then $S=\{(1\ 2), (2\ 3),\ldots, (n-1\ n)\}$ and $\tS=S\cup\{\vp^{\chi_{1,n}^{\vee}}(1\ n)\}$.
The Iwahori subgroup $I\subset K$ is the inverse image of the lower triangular matrices under the projection $K\rightarrow G(\aFq)$ induced by $\vp\mapsto 0$.

Let us denote by $\GSp_{2n}\subset \GL_{2n}$ the group of symplectic similitudes of dimension $2n$ as in \cite[\S 2.3]{GC10}.
In the case $G=\GSp_{2n}$, we will use the following description.
Let $T$ (resp.\ $B$) be the intersection of the torus (resp.\ Borel subgroup) of $\GL_{2n}$ as above with $\GSp_{2n}$.
See \cite[\S 8]{GC12} for the description of the corresponding roots.
The cocharacter group $X_*(T)$ can be identified with the set $\{(m_1,\cdots, m_{2n})\in \Z^{2n}\mid m_1+m_{2n}=m_2+m_{2n-1}=\cdots=m_n+m_{n+1}\}$.
Set $s_1=(1\ 2)(2n-1\ 2n), s_2=(2\ 3)(2n-2\ 2n-1),\ldots, s_{n-1}=(n-1\ n)(n+1\ n+2), s_n=(n\ n+1)$.
Then $S=\{s_1,s_2,\ldots,s_n\}$ and the finite Weyl group is the subgroup of the symmetric group of degree $2n$ generated by $S$.
Set $s_0=\vp^{\chi_{1,2n}^{\vee}}(1\ 2n)$.
Then $\tS=S\cup\{s_0\}$.
The standard Iwahori subgroup is the intersection of the standard Iwahori subgroup of $\GL_{2n}$ as above with $\GSp_{2n}$.

\subsection{Affine Deligne-Lusztig Varieties}
\label{ADLV}
For $w\in \tW$ and $b\in G(L)$, the affine Deligne-Lusztig variety $X_w(b)$ in the affine flag variety $G(L)/I$ is defined as
$$X_w(b)=\{xI\in G(L)/I\mid x^{-1}b\sigma(x)\in IwI\}.$$
For $\mu\in \Y_+$ and $b\in G(L)$, the affine Deligne-Lusztig variety $X_{\mu}(b)$ in the affine Grassmannian $G(L)/K$ is defined as
$$X_{\mu}(b)=\{xK\in G(L)/K\mid x^{-1}b\sigma(x)\in K\vp^{\mu}K\}.$$
Depending on whether $\ch F>0$ or $0$, both the affine flag variety and the affine Grassmannian are ind-schemes or ind-perfect schemes (see \cite{PR08} or \cite{Zhu17},\cite{BS17} respectively).
Then the affine Deligne-Lusztig varieties are locally closed subvarieties of them equipped with the reduced scheme structure.
Also, the affine Deligne-Lusztig varieties carry a natural action (by left multiplication) by the $\sigma$-centralizer of $b$
$$\J=\J_b=\{g\in G(L)\mid g^{-1}b\sigma(g)=b\}.$$
(Since $b$ is usually fixed in the discussion, we mostly omit it from the notation.)

The admissible subset of $\tW$ associated with $\mu$ is defined as
$$\Adm(\mu)=\{w\in \tW\mid w\le \vp^{w_0\mu}\ \text{for some}\ w_0\in W_0\}.$$
Set $\SAdm(\mu)=\Adm(\mu)\cap \SW$.
Assume that $\mu$ is minuscule.
Then, by \cite[Theorem 3.2.1]{GH15} (see also \cite[\S2.5]{GHR20}), we have
$$X_{\mu}(b)=\bigsqcup_{w\in\SAdm(\mu)}\pi(X_w(b)),$$
where $\pi\colon G(L)/I\rightarrow G(L)/K$ is the projection.
This is the so-called Ekedahl-Oort stratification.
In the sequel, we set $\SAdm(\mu)_0\coloneqq\{w\in \SAdm(\mu)\mid X_w(\tau_\mu)\neq \emptyset\}$, where $\tau_\mu\in \Omega$ such that $[\tau_\mu]\in B(G,\mu)$ is the unique basic element.

Set $S_w=\max\{S'\subseteq S\mid \Ad(w)(S')=S'\}$.
The following proposition is the key to the explicit description of the affine Deligne-Lusztig varieties.
\begin{prop}
\label{spherical}
Let $\tau\in \Omega$.
Let $w\in W_a\tau$ such that $W_{\supp_\sigma(w)}$ is finite.
Then $$X_w(\tau)=\bigsqcup_{j\in \J_\tau/\J_\tau\cap P_{\supp_\sigma(w)}} jY(w),$$
where $P_{\supp_\sigma(w)}$ is the standard parahoric subgroup corresponding to $\supp_\sigma(w)$ and $Y(w)=\{gI\in P_{\supp_\sigma(w)}/I\mid g^{-1}\tau \sigma(g)\in IwI\}$ is a classical Deligne-Lusztig variety in the finite-dimensional flag variety $P_{\supp_\sigma(w)}/I$.

Let $w\in \SW \cap W_a\tau$ be a $\sigma$-Coxeter element in the finite Weyl group $W_{\supp_\sigma(w)}$.
Then $$\pi(X_w(\tau))=\bigsqcup_{j\in \J_\tau/\J_\tau\cap P_{\supp_\sigma(w)\cup S_w}} j\pi(Y(w)),$$
where $P_{\supp_\sigma(w)\cup S_w}$ is the standard parahoric subgroup corresponding to $\supp_\sigma(w)\cup S_w$.
Moreover $\pi$ induces $Y(w)\cong \pi(Y(w))$.
\end{prop}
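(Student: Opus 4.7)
The plan is to reduce the first statement to a basic-case parahoric-level computation, and then to treat the second statement by a $\sigma$-Coxeter-specific analysis of how the projection $\pi$ behaves on $Y(w)$. Set $J = \supp_\sigma(w)$. By definition $J$ is $\tau$-stable, and by hypothesis $W_J$ is finite, so the parahoric $P_J$ is well-defined, contains $I$, and is normalized by $\tau$. Writing $w = u\tau$ with $u \in W_J$, one has $IwI \subseteq P_J\tau = \tau P_J$. Consider the natural projection $\pi_J \colon G(L)/I \to G(L)/P_J$.

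For the first assertion, any $gI \in X_w(\tau)$ satisfies $g^{-1}\tau\sigma(g) \in IwI \subseteq \tau P_J$, so $\pi_J(gI) = gP_J$ lies in the parahoric-level affine Deligne-Lusztig variety $Z := \{hP_J : h^{-1}\tau\sigma(h) \in \tau P_J\}$. Since $[\tau]$ is basic, a standard argument (writing $h = jp$ with $j \in \J_\tau$ and iterating) identifies $Z$ with the orbit $\J_\tau \cdot P_J/P_J \cong \J_\tau/\J_\tau \cap P_J$. The fiber of $\pi_J|_{X_w(\tau)}$ over the basepoint $P_J$ is by definition $Y(w)$, and left-multiplying by $j \in \J_\tau$ gives $j \cdot Y(w)$ in the fiber over $jP_J$. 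As $j$ ranges over representatives of $\J_\tau/\J_\tau \cap P_J$, these fibers are pairwise disjoint and exhaust $X_w(\tau)$, yielding the decomposition.

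For the second assertion, set $J' = J \cup S_w$; we must verify (a) $\pi|_{Y(w)} \colon Y(w) \to \pi(Y(w))$ is a universal homeomorphism, and (b) the index set collapses from $\J_\tau/\J_\tau \cap P_J$ to $\J_\tau/\J_\tau \cap P_{J'}$. For (a), suppose $g_1I, g_2I \in Y(w)$ with $g_1K = g_2K$, so that $g_2 = g_1k$ for some $k \in P_J \cap K = P_{J \cap S}$. Decomposing $k = i_1 v i_2$ with $v \in W_{J \cap S}$ and $i_1, i_2 \in I$ via the Iwahori-Bruhat decomposition of $K$, the pair of conditions $g_1^{-1}\tau\sigma(g_1), (g_1k)^{-1}\tau\sigma(g_1k) \in IwI$ yields $v^{-1}(IwI)\sigma(v) \cap IwI \neq \emptyset$. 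The hypothesis $w \in \SW$ gives the length additivity $\ell(uw) = \ell(u) + \ell(w)$ for all $u \in W_0$, which collapses $Iv^{-1}I \cdot IwI$ to the single cell $Iv^{-1}wI$; a reduced-expression analysis then uses the $\sigma$-Coxeter property of $w$ in $W_J$ to force $v = 1$, hence $k \in I$ and $g_1I = g_2I$. For (b), the stabilizer of $\pi(Y(w))$ inside $\J_\tau$ grows past $\J_\tau \cap P_J$ by exactly those finite simple reflections that $\Ad(w)$ preserves as a set, which by the definition of $S_w$ generate $W_{S_w}$; tracing through the Bruhat-Iwahori decomposition of $K$ identifies the stabilizer as $\J_\tau \cap P_{J'}$.

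The main difficulty is the combinatorial argument in (a): excluding any nontrivial $v \in W_{J \cap S}$ with $v^{-1}(IwI)\sigma(v) \cap IwI \neq \emptyset$ when $w$ is a $\sigma$-Coxeter element of $W_J$ requires delicate manipulation of reduced expressions, exploiting both the $\SW$ condition (to collapse certain Bruhat products to a single cell) and the minimality built into the $\sigma$-Coxeter property (to forbid nontrivial right-side cancellations involving $\sigma(v)$). Granted (a), step (b) reduces to a bookkeeping exercise tracking how $P_J$-cosets inside $K$ are glued into $P_{J'}$-cosets under $\pi$.
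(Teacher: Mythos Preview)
The paper does not give a direct proof of this proposition; it simply cites \cite[Proposition~2.2.1 and Corollary~4.6.2]{GH15}. Your argument for the first assertion is essentially the standard one behind that citation (project to $G(L)/P_J$, identify the parahoric-level variety with a single $\J_\tau$-orbit since $\tau$ is basic, and read off the fibers), and it is correct.

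Your treatment of the second assertion, however, has genuine gaps. In step~(a) you correctly reduce injectivity of $\pi|_{Y(w)}$ to the statement that any $v\in W_{J\cap S}$ with $Iv^{-1}wI\cdot IvI\cap IwI\neq\emptyset$ must be trivial; unwinding, this condition is exactly $w^{-1}vw\le v$ in Bruhat order. But the decisive step---``a reduced-expression analysis then uses the $\sigma$-Coxeter property of $w$ in $W_J$ to force $v=1$''---is asserted rather than proved, and it is far from clear how to carry it out directly. This is precisely the content of \cite[Corollary~4.6.2]{GH15}, whose proof goes by a different route: it uses Lusztig's result that a Coxeter-type Deligne--Lusztig variety lies in a single Bruhat cell, together with the compatibility of the fine Deligne--Lusztig varieties under parabolic induction, rather than a bare manipulation of reduced words. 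In step~(b) your identification of the stabilizer as $\J_\tau\cap P_{J\cup S_w}$ is likewise only a sketch: you do not verify that $W_{J\cup S_w}$ is finite (so that $P_{J\cup S_w}$ is defined), nor explain concretely why elements of $W_{S_w}$ preserve $\pi(Y(w))$, nor why nothing larger does. You yourself flag~(a) as the main difficulty; as written, the proposal defers both~(a) and~(b) rather than resolving them.
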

\begin{proof}
See \cite[Proposition 2.2.1 and Corollary 4.6.2]{GH15}.
\end{proof}

\subsection{The $\J$-stratification}
\label{J-str}
For any $g,h\in G(L)$, let $\inv(g,h)$ (resp.\ $\inv_K(g,h)$) denote the relative position, i.e., the unique element in $\tW$ (resp.\ $\Y_+$) such that $g^{-1}h\in I\inv(g,h)I$ (resp.\ $K\vp^{\inv_K(g,h)} K$).
By definition, two elements $gI,hI\in G(L)/I$ (resp.\ $gK,hK\in G(L)/K$) lie in the same $\J$-stratum if and only if for all $j\in \J$, $\inv(j,g)=\inv(j,h)$ (resp.\ $\inv_K(j,g)=\inv_K(j,h)$).
Clearly, this does not depend on the choice of $g,h$.
By \cite[Theorem 2.10]{Gortz19}, the $\J$-strata are locally closed in $G(L)/I$ (resp.\ $G(L)/K$).
By intersecting each $\J$-stratum with affine Deligne-Lusztig varieties, we obtain the $\J$-stratification of them.

As explained in \cite[Remark 2.1]{CV18}, the $\J$-stratification heavily depends on the choice of $b$ in its $\sigma$-conjugacy class.
Thus we need to fix a specific representative to compare the $\J$-stratification on $X_{\mu}(b)$ to another stratification.
It is pointed out in loc. cit that if $[b]$ is a basic class in $B(G, \mu)$, then a reasonable choice of $b$ is the unique length $0$ element $\tau_{\mu}$.
Also, for any $w\in \tW$, the $\J_{\dot w}$-stratification is independent of the choice of lift $\dot w$ in $G(L)$ (cf.\  \cite[Lemma 2.5]{Gortz19}).
In the rest of this subsection, we fix $b=\tau_\mu$ and hence $\J=\J_{\tau_\mu}$.
In the case of $G=\GSp_{2n}$, we set $\J^0=\{j\in \J\mid \kappa(j)=0\}$ (note that $\pi_1(\GSp_{2n})\cong \Z$).

Assume that $\mu$ is minuscule.
We say that $(G,\mu)$ is of Coxeter type if $$\SAdm(\mu)_0=\{w\in \SAdm(\mu)\mid \text{$W_{\supp_\sigma(w)}$ is finite, and $w$ is $\sigma$-Coxeter in $W_{\supp_\sigma(w)}$}\}.$$
If $(G,\mu)$ is of Coxeter type, then for each $w\in \SAdm(\mu)_0$, we have
$$\pi(X_w(\tau_\mu))=\bigsqcup_{j\in \J/\J\cap P_{\supp_\sigma(w)\cup S_w}}j \pi(Y(w))\quad\text{and}\quad Y(w)\cong \pi(Y(w))$$
by Proposition \ref{spherical}.
Thus if $(G,\mu)$ is of Coxeter type, we obtain the decomposition $X_{\mu}(\tau_\mu)$ as a union of classical Deligne-Lusztig varieties of Coxeter type in a natural way.
We call this stratification the {\it Bruhat-Tits stratification}.
Also, this is a stratification in the strong sense, i.e., the closure of a stratum is a union of strata.
The closure of a stratum $j\pi(Y(w))$ contains a stratum $j'\pi(Y(w'))$ if and only if the following two conditions are both satisfied:
\begin{enumerate}[(1)]
\item $w\geq_{S}w'$, which means by definition that there exists $u\in W_0$ such that $w\geq u^{-1}w'u$.
\item $j(\J\cap P_w)\cap j'(\J\cap P_{w'})\neq \emptyset$.
\end{enumerate}
By \cite[\S 4.7]{He07}, $\geq_{S}$ gives a partial order on $\SW$.
Let $\cB(\J, F)$ denote the rational Bruhat-Tits building of $\J$.
Then (2) above is equivalent to requiring that $\kappa(j)=\kappa(j')$ and that the simplices in $\cB(\J, F)$ corresponding to $j(\J\cap P_w)j^{-1}$ and $j'(\J\cap P_{w'})j'^{-1}$ are neighbors (i.e., there exists an alcove which contains both of them).
In \cite[\S4]{CV18}, Chen-Viehmann conjectured that the Bruhat-Tits stratification coincides with the $\J$-stratification and verified this conjecture in the Siegel case of genus 2 and the Vollaard-Wedhorn case.
In \cite{Gortz19}, G\"ortz proved this conjecture in general.

We go back to the general situation with minuscule $\mu$.
We consider the following conditions (the case of genus 3 in \S\ref{genus 3} satisfies all of them).
\begin{itemize}
\item For $w\in \SAdm(\mu)_0$, $\J$ acts transitively on the set of irreducible components of $X_w(\tau_\mu)$.
\item For $w\in \SAdm(\mu)_0$, there exists a standard parahoric subgroup $P_w\subset G(L)$ and an irreducible component $Y(w)$ of $X_w(\tau_\mu)$ such that $\pi(X_w(\tau_\mu))=\bigsqcup_{j\in \J/\J\cap P_w}j \pi(Y(w))$.
\item $Y(w)\cong \pi(Y(w))$ and each $j \pi(Y(w))$ is a $\J$-stratum of $X_{\mu}(\tau_\mu)$.
\end{itemize}
In this case, we say that the closure relation can be described in terms of $\cB(\J, F)$ if the $\J$-stratification of $X_{\mu}(b)$ is a stratification in the strong sense and $\overline{j\pi(Y(w))}\supseteq j'\pi(Y(w'))$ is equivalent to the following condition:
\begin{enumerate}[($\ast$)]
\item There exist sequences $w=w_0\geq_{S} w_1\geq_{S}\cdots \geq_{S}w_k=w'$ in $\SAdm(\mu)_0$ and $j=j_0, j_1\ldots, j_k=j'$ in $\J$ such that $j_{i-1}(\J\cap P_{w_{i-1}})\cap j_i(\J\cap P_{w_i})\neq \emptyset$ for $1\le i\le k$.
\end{enumerate}
We write $(j,w)\geq (j',w')$ if $(\ast)$ holds.

\subsection{Deligne-Lusztig Reduction Method}
\label{DL method}
The following Deligne-Lusztig reduction method was established in \cite[Corollary 2.5.3]{GH10}.
\begin{prop}
\label{DL method prop}
Let $w\in \tW$ and let $s\in \tS$ be a simple affine reflection.
If $\ch(F)>0$, then the following two statements hold for any $b\in G(L)$.
\begin{enumerate}[(i)]
\item If $\ell(sws)=\ell(w)$, then there exists a $\J_b$-equivariant universal homeomorphism $X_w(b)\rightarrow X_{sws}(b)$.
\item If $\ell(sws)=\ell(w)-2$, then there exists a decomposition $X_w(b)=X_1\sqcup X_2$ such that
\begin{itemize}
\item $X_1$ is open and there exists a $\J_b$-equivariant morphism $X_1\rightarrow X_{sw}(b)$, which is  the composition of a Zariski-locally trivial $\G_m$-bundle and a universal homeomorphism. 
\item $X_2$ is closed and there exists a $\J_b$-equivariant morphism $X_2\rightarrow X_{sws}(b)$, which is the composition of a Zariski-locally trivial $\A^1$-bundle and a universal homeomorphism. 
\end{itemize}
If $\ch(F)=0$, then the above statements still hold by replacing $\A^1$ and $\G_m$ by $\A^{1,\pfn}$ and $\G_m^{\pfn}$ respectively.
\end{enumerate}
\end{prop}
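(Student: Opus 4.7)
The plan is to fibre $X_w(b)$ along the projection $\pi_s \colon G(L)/I \to G(L)/P_s$, where $P_s \coloneqq I \sqcup IsI$ is the standard parahoric subgroup attached to $\{s\}$. Each fibre is a copy of $P_s/I$, which is $\mathbb{P}^1$ in equal characteristic (and its perfection in mixed characteristic). For $gI \in X_w(b)$ and any representative $g' = gp$ with $p \in P_s$ in the same fibre, we have
$$(g')^{-1}b\sigma(g') \;=\; p^{-1}\cdot g^{-1}b\sigma(g)\cdot\sigma(p) \;\in\; p^{-1}\cdot IwI\cdot\sigma(p),$$
and the Iwahori double coset of the right-hand side is controlled by the standard multiplication rules $IsI \cdot IwI \in \{IswI,\; IswI \sqcup IwI\}$ (depending on whether $sw > w$ or $sw < w$) and the symmetric statement for $IwI \cdot IsI$. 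Sorting the points of each fibre by the resulting double coset yields the stratification asserted in the proposition.

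In case (i), where $\ell(sws) = \ell(w)$, one has (after symmetry) $sw > w$ and $ws < w$. A direct root-subgroup calculation then shows that within each $P_s$-fibre of $\pi_s$ meeting $X_w(b)$ there is a unique element $p \in P_s \setminus I$, of the form $us$ with $u$ in the appropriate affine root subgroup, such that $(gp)^{-1}b\sigma(gp) \in I(sws)I$. The assignment $gI \mapsto gpI$ defines a $\J_b$-equivariant bijection $X_w(b) \to X_{sws}(b)$ given algebraically by multiplication of representatives in $G(L)$, which one checks is a universal homeomorphism.

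In case (ii), where $\ell(sws) = \ell(w) - 2$, both $sw < w$ and $ws < w$, and $IsI \cdot IwI$ and $IwI \cdot IsI$ each decompose into two double cosets. Sorting the points of each $P_s$-fibre of $X_w(b)$ according to the double coset of $(g')^{-1}b\sigma(g')$ produces, fibrewise, an open dense stratum parametrised by the big cell $\Gm \subset \mathbb{P}^1$ and a complementary closed stratum parametrised by an $\A^1$; these globalise to the decomposition $X_w(b) = X_1 \sqcup X_2$ together with the asserted $\Gm$- and $\A^1$-bundle structures mapping to $X_{sw}(b)$ and $X_{sws}(b)$ respectively. The main technical obstacle is to promote this fibrewise picture to a global bundle with Zariski-local (not merely étale-local) trivialisations, and to track everything through the perfection in the mixed-characteristic case where $P_s/I$ is only a perfect $\mathbb{P}^1$; this forces the maps to be asserted only up to universal homeomorphism. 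The $\J_b$-equivariance is automatic throughout, since $\J_b$ acts on the left while all the manipulations above take place on the right.
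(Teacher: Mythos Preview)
The paper does not prove this proposition; it simply cites \cite[Corollary~2.5.3]{GH10}. Immediately after the statement, however, the paper does sketch the construction of the maps, and that sketch differs from yours in an essential way. The paper's map is: given $gI\in X_w(b)$ with $\ell(sw)<\ell(w)$, let $g_1I$ be the \emph{unique} point with $\inv(g,g_1)=s$ and $\inv(g_1,b\sigma(g))=sw$; then in case (ii) one sorts $gI$ according to whether $\inv(g_1,b\sigma(g_1))$ equals $sw$ or $sws$. The crucial point is that $g_1$ is pinned down by a relative-position condition against $b\sigma(g)$, not against $b\sigma(g_1)$.

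Your case (ii) has a genuine gap. You propose to ``sort the points of each $P_s$-fibre of $X_w(b)$ according to the double coset of $(g')^{-1}b\sigma(g')$,'' but by definition every point of $X_w(b)$ already satisfies $(g')^{-1}b\sigma(g')\in IwI$, so this sorts nothing. Relatedly, the $\Gm$ and the $\A^1$ in the statement are \emph{not} complementary strata inside a single $\mathbb{P}^1$-fibre of $\pi_s$ (indeed $\Gm$ and $\A^1$ do not partition $\mathbb{P}^1$); they are the fibres of the two separate maps $X_1\to X_{sw}(b)$ and $X_2\to X_{sws}(b)$, and those maps are defined via $gI\mapsto g_1I$ as above, not via the projection $\pi_s$. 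Your case (i) is closer to correct, but the uniqueness assertion (``a unique $p\in P_s\setminus I$ such that $(gp)^{-1}b\sigma(gp)\in I(sws)I$'') is again the wrong characterisation of $g_1$: the uniqueness comes from the factorisation $IwI\subset IsI\cdot IswI$ when $sw<w$, which fixes $g_1$ by its position relative to $b\sigma(g)$, after which one \emph{checks} that $g_1I\in X_{sws}(b)$.
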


We sketch the construction of maps in the proposition.
Let $gI\in X_w(b)$.
If $\ell(sw)<\ell(w)$ (we can reduce to this case by exchanging $w$ and $sws$), then let $g_1I$ denote the unique element in $G(L)/I$ such that $\inv(g, g_1)=s$ and $\inv(g_1,b\sigma(g))=sw$.
In the case of (ii), the set $X_1$ (resp.\ $X_2$) above consists of the elements $gI\in X_w(b)$ satisfying $\inv(g_1,b\sigma(g_1))=sw$ (resp.\ $sws$).
All of the maps in the proposition are given as the map sending $gI$ to $g_1I$.

\begin{rema}
\label{trivial}
Let $\pr_I\colon G(L)/I\times G(L)/I\rightarrow G(L)/I$ be the projection to the first factor.
We denote by $O(s)\subset G(L)/I\times G(L)/I$ the locally closed subvariety of pairs $(gI,hI)$ such that $\inv(g,h)=s$.
Then the restriction $\pr_I\colon O(s)\rightarrow G(L)/I$ is a Zariski-locally trivial $\A^1$-bundle.
In particular, this is trivial over any Schubert cell $IvI/I, v\in \tW$.
This implies that the morphism $X_1\rightarrow X_{sw}(b)$ (resp.\ $X_2\rightarrow X_{sw\sigma(s)}(b)$) in (ii) is trivial over $X_{sw}(b)\cap IvI/I$ (resp.\ $X_{sw\sigma(s)}(b)\cap IvI/I$).
\end{rema}

The following lemma will be used in \S3 and \S4.
\begin{lemm}
\label{closed}
Let $\mu$ be a minuscule cocharacter and let $w\in \SAdm(\mu)_0$.
Then $X_w(\tau_\mu)$ is closed in $\pi^{-1}(\pi(X_w(\tau_\mu)))$.
\end{lemm}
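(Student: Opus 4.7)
The plan is to identify $X_w(\tau_\mu)$ as the intersection of $\pi^{-1}(\pi(X_w(\tau_\mu)))$ with the Bruhat closure of $X_w(\tau_\mu)$ in $G(L)/I$.

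First I would describe that closure. The morphism $\phi\colon G(L)/I \to G(L)/I \times G(L)/I,\ gI \mapsto (gI, \tau_\mu\sigma(g)I)$ is well-defined (the right factor is $\sigma(I)$-invariant on the right), and $X_w(\tau_\mu)=\phi^{-1}(O(w))$ for the relative-position stratum $O(w)=\{(xI,yI):\inv(xI,yI)=w\}$. Since $\overline{O(w)}=\bigsqcup_{w'\le w}O(w')$ by standard affine Schubert theory, pulling back gives
\[
\overline{X_w(\tau_\mu)}\;\subseteq\; \bigsqcup_{w'\le w} X_{w'}(\tau_\mu)\quad\text{in } G(L)/I.
\]

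The second, and main, step would be the compatibility of the Iwahori- and Ekedahl--Oort stratifications: for every $v'\in\Adm(\mu)$, the image $\pi(X_{v'}(\tau_\mu))$ is contained in a single EO stratum $\pi(X_{w_0(v')}(\tau_\mu))$, where $w_0(v')\in\SAdm(\mu)$ is the unique minimal-length representative of the left coset $W_Sv'$. (Writing $v'=uw_0$ with $u\in W_S,\,w_0\in\SW,\,\ell(v')=\ell(u)+\ell(w_0)$, one has $w_0\le v'$; and since $\Adm(\mu)$ is downward-closed in Bruhat, $w_0\in\SAdm(\mu)$.) To prove this, for $b_0\in Iv'I$ one uses the length-additive factorization $Iv'I=IuI\cdot Iw_0I$ to write $b_0=b_ub_{w_0}$; the naive try $k=b_u\in K$ gives $k^{-1}b_0\sigma(k)=b_{w_0}\sigma(b_u)\in Iw_0I\cdot IuI$, which is only a union of Bruhat cells, so completing the reduction requires an induction on $\ell(u)$ that stays inside the coset $W_Sw_0$ at each step, using the minuscule hypothesis on $\mu$ to keep the intermediate elements inside $\Adm(\mu)$. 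I expect this compatibility to be the main obstacle of the proof, and the key technical content.

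Finally I would combine the two steps. Suppose $gI\in\overline{X_w(\tau_\mu)}\cap\pi^{-1}(\pi(X_w(\tau_\mu)))$. Then $gI\in X_{w'}(\tau_\mu)$ for some $w'\le w$, and the compatibility yields $gK\in\pi(X_{w_0(w')}(\tau_\mu))$. Since also $gK\in\pi(X_w(\tau_\mu))$ and the EO strata in the decomposition $X_\mu(\tau_\mu)=\bigsqcup_{v\in\SAdm(\mu)}\pi(X_v(\tau_\mu))$ are disjoint, $w_0(w')=w$, so $w'\in W_Sw$ and $\ell(w')=\ell(u)+\ell(w)\ge\ell(w)$; combined with $w'\le w$ (forcing $\ell(w')\le\ell(w)$), we conclude $w'=w$, so $gI\in X_w(\tau_\mu)$, establishing closedness.
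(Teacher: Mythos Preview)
Your overall structure matches the paper's: bound the closure by $\bigcup_{w'\le w}X_{w'}(\tau_\mu)$, then show that for $w'<w$ the image $\pi(X_{w'}(\tau_\mu))$ lands in an Ekedahl--Oort stratum different from $\pi(X_w(\tau_\mu))$. The problem is that your compatibility claim in step~2 is \emph{false} as stated: the EO stratum containing $\pi(X_{v'}(\tau_\mu))$ is \emph{not} in general indexed by the $\SW$-representative $w_0(v')$ of the left coset $W_Sv'$.

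Here is a concrete counterexample in $\GSp_8$ (the genus~$4$ case of the paper). Take $v'=s_1s_0\tau$, so $w_0(v')=s_0\tau\in\SAdm(\mu)$. Applying the Deligne--Lusztig reduction of Proposition~\ref{DL method prop}(i) with $s=s_1\in S$ gives $s_1(s_1s_0\tau)s_1=s_0\tau s_1=s_0s_3\tau$ (using $\tau s_1\tau^{-1}=s_3$), and then with $s=s_3\in S$ gives $s_3(s_0s_3\tau)s_3=s_0\tau s_3=s_0s_1\tau$. Since both reductions use reflections in $S$, they preserve $K$-cosets, so
\[
\pi(X_{s_1s_0\tau}(\tau))=\pi(X_{s_0s_1\tau}(\tau)).
\]
But $s_0s_1\tau\in\SAdm(\mu)_0$ (so the right-hand side is nonempty) and $s_0s_1\tau\neq s_0\tau$, hence by disjointness of EO strata $\pi(X_{s_1s_0\tau}(\tau))\cap\pi(X_{s_0\tau}(\tau))=\emptyset$. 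Thus $\pi(X_{v'}(\tau_\mu))\not\subseteq\pi(X_{w_0(v')}(\tau_\mu))$. The induction you sketch cannot stay inside $W_Sw_0$: $\sigma$-conjugation $v'\mapsto sv's$ moves both the left \emph{and} right $W_0$-coset.

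The paper avoids this by not insisting on the $\SW$-representative. It quotes \cite[\S3.2~(3)]{GH15} for the equality $\pi(X_{w'}(\tau_\mu))=\pi(X_{w''}(\tau_\mu))$ when $w'\in W_{S_{w''}}w''$ with $w''\in\SAdm(\mu)$, and then invokes \cite[Proposition~3.1.1]{GH15} together with the Deligne--Lusztig method to reduce an arbitrary $w'<w$ (inside $W_0\vp^\mu W_0=\Adm(\mu)$) to such a $w''$. The only property of $w''$ needed is $\ell(w'')\le\ell(w')<\ell(w)$, which the reduction guarantees; then $w''\neq w$ and EO disjointness gives $\pi(X_{w'}(\tau_\mu))\cap\pi(X_w(\tau_\mu))=\emptyset$. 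Your final paragraph can be salvaged by replacing ``$w_0(w')=w$, so $w'\in W_Sw$'' with ``$w''=w$, so $\ell(w)=\ell(w'')\le\ell(w')$'', which yields the same contradiction.
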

\begin{proof}
It suffices to prove that
$$\pi^{-1}(\pi(X_{w}(\tau_\mu)))\cap (\bigcup_{w'\le w}X_{w'}(\tau_\mu))=X_{w}(\tau_\mu).$$
We may assume that $w$ has a length greater than $0$.
Let $w''\in \SAdm(\mu)$ with $\ell(w'')< \ell(w)$.
Then we have $\pi(X_{w''}(\tau_\mu))\cap \pi(X_{w}(\tau_\mu))=\emptyset$.
By \cite[\S 3.2 (3)]{GH15}, we also have $\pi(X_{w'}(\tau_\mu))=\pi(X_{w''}(\tau_\mu))$ and hence $\pi(X_{w'}(\tau_\mu))\cap \pi(X_w(\tau_\mu))=\emptyset$ for any $w'\in W_{S_{w''}}w''$.
Note that $\SW\cap W_0\vp^\mu W_0=\SAdm(\mu)$ (cf.\ \cite[(2.3.3)]{Macdonald03}).
Then the above equality follows from \cite[Proposition 3.1.1]{GH15} and Proposition \ref{DL method prop}.
\end{proof}

\subsection{Length Positive Elements}
\label{LP}
We denote by $\delta^+$ the indicator function of the set of positive roots, i.e.,
$$\delta^+\colon \Phi\rightarrow \{0,1\},\quad \alpha \mapsto
\begin{cases}
1 & (\alpha\in \Phi_+) \\
0 & (\alpha\in \Phi_-).
\end{cases}$$
Note that any element $w\in \tW$ can be written in a unique way as $w=x\vp^\mu y$ with $\mu$ dominant, $x,y\in W_0$ such that $\vp^\mu y\in \SW$.
We have $p(w)=xy$ and $\ell(w)=\ell(x)+\la\mu, 2\rho\ra-\ell(y)$.
We define the set of {\it length positive} elements by $$\LP(w)=\{v\in W_0\mid \la v\alpha,y^{-1}\mu\ra+\delta^+(v\alpha)-\delta^+(xyv\alpha)\geq 0\  \text{for all $\alpha\in \Phi_+$}\}.$$
Then we always have $y^{-1}\in \LP(w)$.
Indeed $y$ satisfies the condition that
$\la\alpha, \mu\ra\geq \delta^+(-y^{-1}\alpha)\ \text{for all $\alpha\in \Phi_+$}$.
Since $\delta^+(\alpha)+\delta^+(-\alpha)=1$, we have $$\la y^{-1}\alpha, y^{-1}\mu\ra+\delta^+(y^{-1}\alpha)-\delta^+(x\alpha)=\la \alpha,\mu\ra-\delta^+(-y^{-1}\alpha)+\delta^+(-x\alpha)\geq 0.$$
The notion of length positive elements was defined by Schremmer in \cite{Schremmer22}.

The following theorem is a refinement of the non-emptiness criterion in \cite{GHN15}, which is conjectured by Lim in \cite{Lim23} and proved by Schremmer in \cite[Proposition 5]{Schremmer23}.
\begin{theo}
\label{empty}
Assume that the Dynkin diagram of $G$ is connected.
Let $b\in G(L)$ be a basic element with $\kappa(b)=\kappa(w)$.
Then $X_w(b)=\emptyset$ if and only if the following two conditions are both satisfied:
\begin{enumerate}[(i)]
\item $|W_{\supp_\sigma(w)}|$ is not finite.
\item There exists $v\in \LP(w)$ such that $\supp(v^{-1}p(w)v)\subsetneq S$.
\end{enumerate}
\end{theo}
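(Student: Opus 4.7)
The plan is to follow the reduction strategy combining the Deligne-Lusztig method of Proposition~\ref{DL method prop} with the spherical support description of Proposition~\ref{spherical}, together with a combinatorial analysis of length positive elements in the spirit of Schremmer \cite{Schremmer23}.

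First, for the sufficient direction (both (i) and (ii) imply $X_w(b)=\emptyset$), condition (ii) provides $v\in\LP(w)$ and a simple reflection $s\in S\setminus\supp(v^{-1}p(w)v)$, so after conjugating by $v$ the finite part of $w$ lies in the proper standard parabolic $W_{S\setminus\{s\}}$. I would then invoke the geometric interpretation of length positivity (the key translation between the algebraic length formula and alcove-walk data) to produce a proper $\tau$-stable $J\subsetneq \tS$ together with a reduction bringing $w$ into $\tW_J$ after suitable conjugation. Combined with (i), which forbids $W_{\supp_\sigma(w)}$ from being finite, a Hodge--Newton style comparison of Newton and Kottwitz invariants then shows that a basic $b$ with $\kappa(b)=\kappa(w)$ cannot be $\sigma$-conjugated into the corresponding Levi, forcing $X_w(b)=\emptyset$.

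For the necessary direction, suppose $X_w(b)=\emptyset$. If $W_{\supp_\sigma(w)}$ were finite, Proposition~\ref{spherical} would express $X_w(\tau)$ as a disjoint union of classical Deligne-Lusztig varieties $Y(w)$ inside a finite-dimensional flag variety, and these are nonempty by Lang's theorem, contradicting emptiness. Hence (i) holds. To deduce (ii) I would argue by contraposition and build a point of $X_w(b)$ by induction on $\ell(w)$: pick $s\in\tS$ with $\ell(sws)\le\ell(w)$, apply Proposition~\ref{DL method prop} to transfer nonemptiness from $X_{sws}(b)$ (or, in the length-decreasing case, $X_{sw}(b)$) up to $X_w(b)$, and verify that after the move the hypothesis ``every $v\in\LP$ has full support'' still propagates to a strictly shorter element in the orbit. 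The base of the induction reduces to elements of minimal length in the $\sigma$-conjugacy class of $w$ in $\tW$, where nonemptiness can be read off directly using He's results on minimal length representatives.

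The main obstacle is precisely this last step: controlling how $\LP(w)$ transforms under the Deligne-Lusztig moves $w\mapsto sw$ and $w\mapsto sws$, so that the hypothesis ``no $v\in\LP$ has $\supp(v^{-1}p(w)v)\subsetneq S$'' can be propagated along the induction. The combinatorics of length positive elements do not transform transparently under these moves, and the right framework is the quantum Bruhat graph machinery developed in \cite{Schremmer22} and refined in \cite{Schremmer23}; translating the reduction steps into statements about this graph is where the real work lies, and any self-contained treatment would have to reprove the relevant structural lemmas there. By contrast, the sufficient direction and the reduction of the necessary direction to the infinite-$W_{\supp_\sigma(w)}$ case are essentially formal, given the tools of \S\ref{ADLV}--\S\ref{DL method} already in place.
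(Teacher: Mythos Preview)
The paper does not prove this theorem at all: it is quoted as a known result, attributed to Schremmer \cite[Proposition~5]{Schremmer23} (refining the criterion of \cite{GHN15}, as conjectured by Lim \cite{Lim23}). There is therefore no ``paper's own proof'' to compare your proposal against; the authors treat Theorem~\ref{empty} as a black box and only \emph{apply} it in Propositions~\ref{genus 3 empty} and Lemma~\ref{genus 4 empty}.

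As for your sketch itself: it is an honest high-level outline, and you correctly identify that the real content lies in tracking how $\LP(w)$ behaves under the Deligne--Lusztig moves, which is precisely the quantum Bruhat graph analysis of \cite{Schremmer22,Schremmer23}. But as written it is not a proof, only a plan that ultimately defers to the same reference the paper cites. In particular, your sufficient direction is vaguer than you suggest: the passage from ``$v^{-1}p(w)v$ misses a simple reflection'' to ``$w$ can be brought into a proper Levi-type subgroup compatible with the basic $b$'' is exactly the hard combinatorial step, and invoking a ``Hodge--Newton style comparison'' does not discharge it. If you intend to include a genuine proof rather than a citation, you would need to reproduce the relevant structural lemmas from \cite{Schremmer23}; otherwise, the appropriate course is simply to cite Schremmer, as the paper does.
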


\begin{rema}
If $\kappa(b)\neq\kappa(w)$, then $X_w(b)=\emptyset$.
\end{rema}

For $w\in \tW$, we say that $w$ has positive Coxeter part if there exists $v\in \LP(w)$ such that $v^{-1}p(w)v$ is a partial Coxeter element.
By \cite[Theorem 4.7]{SSY23} (see also \cite[Theorem 1.1]{HNY22}), this condition induces a simple geometric structure.
\begin{theo}
\label{simple}
Assume that $w\in \tW$ has positive Coxeter part and $X_w(b)\neq \emptyset$.
Then $X_w(b)$ has only one $\J_b$-orbit of irreducible components, and each irreducible component is an iterated fibration over a Deligne-Lusztig variety of Coxeter type whose iterated fibers are either $\A^1$ or $\G_m$.
If $b$ is basic, then all fibers in any such an iterated fibration given by Deligne-Lusztig reduction method are $\A^1$.
\end{theo}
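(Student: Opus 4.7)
The plan is to proceed by induction on $\ell(w)$, using the Deligne-Lusztig reduction method (Proposition \ref{DL method prop}) to peel off simple affine reflections until we land in the situation of Proposition \ref{spherical}. The base case will be when $W_{\supp_\sigma(w)}$ is finite and $w$ is a $\sigma$-Coxeter element of $W_{\supp_\sigma(w)}$: Proposition \ref{spherical} then gives the decomposition $X_w(\tau) = \bigsqcup_{j} jY(w)$ into a single $\J_\tau$-orbit of classical Deligne-Lusztig varieties of Coxeter type, which is a ``trivial'' iterated fibration. Thus the goal of the reduction is to bring $w$ into this shape while recording the geometric effect of each step.

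For the inductive step, the key combinatorial task is to find, given $w$ with positive Coxeter part that is not already of the base-case shape, a simple affine reflection $s \in \tS$ with $\ell(sws) \le \ell(w)$ such that the reduction target ($sw$ for the open piece, $sws$ for the closed piece in Proposition \ref{DL method prop}) again has positive Coxeter part. The witness $v \in \LP(w)$ for which $v^{-1} p(w) v$ is a partial Coxeter element is the main combinatorial lever: since this product omits at least one simple reflection of $S$, one can locate a simple affine reflection ``on the boundary of $\supp_\sigma(w)$'' and update $v$ by a suitable Weyl group correction after the conjugation or multiplication. Proposition \ref{DL method prop} then transfers the geometric structure: case (i) yields a $\J_b$-equivariant universal homeomorphism, while case (ii) yields a $\G_m$-bundle over $X_{sw}(b)$ on the open piece $X_1$ and an $\A^1$-bundle over $X_{sws}(b)$ on the closed piece $X_2$. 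Iterating and invoking the inductive hypothesis on the reduction target produces the iterated fibration structure of each irreducible component, and the uniqueness of the $\J_b$-orbit of components is propagated through the induction by the $\J_b$-equivariance of all reduction maps, eventually reducing to the single-orbit description in the base case.

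For the refinement that all fibers are $\A^1$ when $b$ is basic, I would argue that the open $\G_m$-piece $X_1 \to X_{sw}(b)$ is in fact empty at each length-decreasing step. Concretely, $sw$ still lies in $W_a\tau$ with $\kappa(sw) = \kappa(w) = \kappa(b)$, but the positive Coxeter part of $w$ forces $\supp_\sigma(sw)$ to shrink in a controlled way; combined with the fact that no $v' \in \LP(sw)$ can make $\supp(v'^{-1} p(sw) v') = S$ (a parity/support argument using the witness $v$ for $w$), Theorem \ref{empty} then gives $X_{sw}(b) = \emptyset$. Consequently only the closed piece $X_2$ contributes at each stage, giving an $\A^1$-bundle and preserving the positive Coxeter part condition in the target $sws$.

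The main obstacle will be the combinatorial control of the positive Coxeter part condition along the reduction chain: one must show that a suitable $s \in \tS$ always exists to continue the induction, and that the length-positive witness $v$ can be tracked consistently so as to produce, at each step, a partial Coxeter element of the reduced finite Weyl group. This is the heart of the argument and is essentially the content of \cite[Theorem A]{SSY23}; once this combinatorial backbone is in place, the geometric conclusions follow formally from Propositions \ref{spherical} and \ref{DL method prop} by a routine induction on $\ell(w)$.
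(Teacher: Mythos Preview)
The paper does not give its own proof of Theorem \ref{simple}; it is quoted from the literature, with the sentence ``By \cite[Theorem A]{SSY23} (see also \cite[Theorem 1.1]{HNY22}), this condition induces a simple geometric structure'' standing in for the argument. So there is nothing in the paper to compare your proposal against beyond the fact that the cited references do indeed prove this via a careful analysis of reduction trees, which is broadly the strategy you sketch.

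That said, your outline has a genuine gap in the basic case. To conclude that only $\A^1$-fibers occur you want $X_{sw}(b)=\emptyset$ at every length-decreasing step, and you invoke Theorem \ref{empty}. But Theorem \ref{empty} requires \emph{both} that $|W_{\supp_\sigma(sw)}|$ is infinite \emph{and} that some $v'\in\LP(sw)$ has $\supp(v'^{-1}p(sw)v')\subsetneq S$. Your phrase ``the positive Coxeter part of $w$ forces $\supp_\sigma(sw)$ to shrink in a controlled way'' points in the wrong direction for condition (i): if $\supp_\sigma(sw)$ shrinks enough to generate a finite group, Theorem \ref{empty} no longer applies and $X_{sw}(b)$ may well be nonempty. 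Moreover, the claim that no $v'\in\LP(sw)$ yields full support is asserted as a ``parity/support argument'' but not justified; this is precisely the delicate combinatorics that \cite{SSY23} handles by tracking reduction trees rather than by a direct emptiness criterion. In their approach the uniqueness of the reduction path (for basic $b$) and the fact that the open $\G_m$-piece never contributes are proved together, not by separately showing $X_{sw}(b)=\emptyset$ at each step. Your inductive skeleton is sound, but the mechanism you propose for the $\A^1$-only conclusion would need to be replaced by the reduction-tree analysis of \cite{SSY23}.
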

In this paper, we use Theorem \ref{simple} only in Remark \ref{not positive} and \S\ref{genus 5}.

For minuscule $\mu\in \Y$, we say that $(G,\mu)$ is of positive Coxeter type if every $w\in \SAdm(\mu)_0$ satisfies one of the following conditions:
\begin{enumerate}[(i)]
\item $w$ is a $\sigma$-Coxeter element with $|W_{\supp_\sigma(w)}|$ finite.
\item $w$ has positive Coxeter part.
\end{enumerate}
By \cite[Theorem 4.12]{SSY23}, this definition coincides with the one in \cite{Shimada5} if $G=\GL_n$.

\section{The Case of Genus 3}
\label{genus 3}
In this section, we set $G=\GSp_6$ and $\mu=(1^{(3)},0^{(3)})\in X_*(T)$.
Recall that $W_0$ is the subgroup of the symmetric group of degree $6$ generated by
$$s_1=(1\ 2)(5\ 6),\quad s_2=(2\ 3)(4\ 5),\quad s_3=(3\ 4).$$
Moreover, the affine Weyl group $W_{a}$ is generated by $s_1, s_2, s_3$ and the simple affine reflection 
\[
s_{0} = \varpi^{(1, 0,0,0,0 ,-1)} (1\ 6) \in W_{a}.
\]
Set $\tau=\tau_\mu=\vp^{\mu}s_3s_2s_1s_3s_2s_3=\vp^\mu(1\ 4)(2\ 5)(3\ 6)\in \Omega$ and $\J=\J_\tau$.
Then $$\tau s_1\tau^{-1}=s_2,\quad\tau s_0\tau^{-1}=s_3.$$
Thus there are two $\tau$-orbit in $\tS$, namely, $\{s_0,s_3\}$ and $\{s_1,s_2\}$.
Then $W_a^{\tau}$ is the Weyl group with two simple reflections $s_0s_3$ and $s_1s_2s_1$.
For $j,j'\in \J^0$, we have $\inv(j,j')\in W_a^{\tau}$.
\begin{prop}
\label{genus 3 empty}
We have 
\begin{align*}
\SAdm(\mu)_0=\{s_0s_1s_0\tau,s_0\tau,\tau\}.
\end{align*}
Moreover, the pair $(G,\mu)$ is of positive Coxeter type.
\end{prop}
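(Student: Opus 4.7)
The plan is to establish both assertions by combinatorial enumeration, using Theorem \ref{empty} (the non-emptiness criterion) as the main tool. Since $\mu = (1^{(3)}, 0^{(3)})$ is minuscule with $\ell(\varpi^\mu) = 6$, the set $\SAdm(\mu) = \SW \cap W_0 \varpi^\mu W_0$ is finite, and every element has the form $w = w'\tau$ with $w' \in W_a$ and $0 \leq \ell(w') \leq 6$. I would enumerate these by length, apply Theorem \ref{empty} to determine $\SAdm(\mu)_0$, and then check positive Coxeter type on each surviving element.

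For the enumeration and the non-emptiness test, the key observation is that the $\tau$-orbits on $\tS$ are $\{s_0, s_3\}$ and $\{s_1, s_2\}$, so the $\tau$-stable subsets of $\tS$ are $\emptyset$, $\{s_0, s_3\}$, $\{s_1, s_2\}$, and $\tS$, and only the last generates an infinite Weyl subgroup (recall $s_0$ and $s_3$ commute in type $\tilde C_3$). For each candidate $w = w'\tau$ I compute $\supp_\sigma(w)$; when $\supp_\sigma(w) \subsetneq \tS$, Theorem \ref{empty} gives $X_w(\tau) \neq \emptyset$ automatically. Combined with the $\SW$-condition $\ell(sw) > \ell(w)$ for $s \in S$, which excludes $s_i\tau$ for $i \in \{1,2,3\}$ and similarly rules out longer candidates with proper $\sigma$-support, the survivors of this regime are exactly $\tau$ and $s_0\tau$. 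For candidates with $\supp_\sigma(w) = \tS$, I write $w$ in normal form $x\varpi^\lambda y$, compute $p(w) = xy$, and ask whether some $v \in \LP(w)$ satisfies $\supp(v^{-1}p(w)v) \subsetneq S$. A direct calculation places $s_0 s_1 s_0 \tau$ in normal form $\varpi^\mu (s_3 s_2 s_1)$, so $p(s_0 s_1 s_0 \tau) = s_3 s_2 s_1$ is a (full) Coxeter element of $W(C_3)$; since Coxeter elements in an irreducible Weyl group form a single conjugacy class and each representative uses every simple reflection, no $W_0$-conjugate can reduce the support, so $X_w(\tau) \neq \emptyset$. For every other full-support candidate in $\SAdm(\mu)$ (for instance $s_0 s_1 \tau$, where $p(w)$ turns out to be an order-$3$ signed permutation), $p(w)$ lies in a non-Coxeter $W_0$-conjugacy class of order at most $4$, and one exhibits a length-positive $v$ witnessing $\supp(v^{-1}p(w)v) \subsetneq S$, forcing $X_w(\tau) = \emptyset$.

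For positive Coxeter type, the element $\tau$ has $\supp_\sigma = \emptyset$ and trivially satisfies case (i). For $s_0\tau$, the support $\{s_0, s_3\}$ is finite and forms a single $\tau$-orbit in which $s_0$ appears exactly once, so $s_0\tau$ is $\sigma$-Coxeter and case (i) holds. For $s_0 s_1 s_0 \tau$ the support $\tS$ is infinite, so I verify case (ii): since $y^{-1}$ is always in $\LP(w)$, taking $v = y^{-1} = s_1 s_2 s_3$ yields $v^{-1} p(w) v = y \cdot y \cdot y^{-1} = s_3 s_2 s_1$, a Coxeter element of $W_0$ and in particular a partial Coxeter element. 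The main obstacle is the $\LP$-analysis for the full-support non-Coxeter candidates in the enumeration step; however, since only a small number of $W_0$-conjugacy classes appear as $p(w)$ and they are distinguished by order and cycle structure, the case analysis is tractable.
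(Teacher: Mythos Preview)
Your approach is correct and essentially the same as the paper's. Both enumerate $\SAdm(\mu)$, apply Theorem~\ref{empty} to each element (using that $p(w)$ already has proper support for most full-support candidates, and that $p(s_0s_1s_0\tau)=s_3s_2s_1$ is a Coxeter element so no conjugate can have proper support), and then verify positive Coxeter type by noting $\tau$ and $s_0\tau$ are $\sigma$-Coxeter while $s_0s_1s_0\tau$ has positive Coxeter part via $v=y^{-1}$. The only substantive gap in your write-up is that for $s_0s_1\tau=\vp^\mu s_3s_2s_1s_3$ you assert but do not exhibit the length-positive witness; the paper gives $v=s_3s_1s_2$, for which $v^{-1}p(w)v=s_2s_1$.
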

\begin{proof}
It is straightforward to check that $\SAdm(\mu)$ is equal to
\begin{align*}
\{\vp^{\mu}, \vp^{\mu} s_3,\vp^{\mu}s_3s_2,\vp^{\mu}s_3s_2s_1,\vp^{\mu}s_3s_2s_3,\vp^{\mu}s_3s_2s_1s_3,\vp^{\mu}s_3s_2s_1s_3s_2,\vp^{\mu}s_3s_2s_1s_3s_2s_3\}.
\end{align*}
If $w=\vp^{\mu}, \vp^{\mu} s_3,\vp^{\mu}s_3s_2$ or $\vp^{\mu}s_3s_2s_3$, then $\supp_\sigma(w)=\tS$ and $\supp_\sigma(p(w))\subsetneq S$.
Thus $X_w(\tau)=\emptyset$ by Theorem \ref{empty}.
It is easy to check that $s_3s_1s_2\in \LP(\vp^\mu s_3s_2s_1s_3)$.
Since $\supp_\sigma(\vp^{\mu}s_3s_2s_1s_3)=\tS$ and $\supp_\sigma((s_3s_1s_2)^{-1}s_3s_2s_1s_3(s_3s_1s_2))\subsetneq S$, we have $X_{\vp^{\mu}s_3s_2s_1s_3}(\tau)=\emptyset$ by Theorem \ref{empty}.
Thus $$\SAdm(\mu)_0\subseteq\{\vp^{\mu}s_3s_2s_1,\vp^{\mu}s_3s_2s_1s_3s_2,\vp^{\mu}s_3s_2s_1s_3s_2s_3\}.$$
For $w=\vp^{\mu}s_3s_2s_1s_3s_2,\vp^{\mu}s_3s_2s_1s_3s_2s_3$, we have $\supp_\sigma(w)\neq \tS$.
Note also that $p(\vp^\mu s_3s_2s_1)=s_3s_2s_1$ is a Coxeter element and hence $\supp(v^{-1}p(w)v)=S$ for any $v\in W_0$.
Therefore 
$\SAdm(\mu)_0=\{\vp^{\mu}s_3s_2s_1,\vp^{\mu}s_3s_2s_1s_3s_2,\vp^{\mu}s_3s_2s_1s_3s_2s_3\}=\{s_0s_1s_0\tau,s_0\tau,\tau\}$.
Since $\tau$ and $s_0\tau$ are $\sigma$-Coxter elements, $(G,\mu)$ is of positive Coxeter type.
This finishes the proof.
\end{proof}

\begin{rema}
The element $s_0\tau$ does not have positive Coxeter part.
Indeed, the cycle type of $p(s_0\tau)=(1\ 4\ 6\ 3)(2\ 5)$ is different from that of any Coxeter element in $W_0$.
\end{rema}

By Proposition \ref{spherical}, we have
$$X_{s_0\tau}(\tau)=\bigsqcup_{j\in\J/\J \cap P_{\{s_0,s_3\}}}jY(s_0\tau)\quad\text{and}\quad X_{\tau}(\tau)=\bigsqcup_{j\in\J/\J \cap I}j\{pt\},$$
where $Y(s_0\tau)$ and $Y(\tau)=\{pt\}$ are classical Deligne-Lusztig varieties as in Proposition \ref{spherical}.
By \cite[Corollary 2.5]{Lusztig76} (see also \cite[Proposition 1.1]{Gortz19}), $Y(s_0\tau)\subset Is_0s_3I/I$.

\begin{lemm}
\label{genus 3 Iwahori}
There exists an irreducible component $Y(s_0s_1s_0\tau)$ of $X_{s_0s_1s_0\tau}(\tau)$ such that
$$X_{s_0s_1s_0\tau}(\tau)=\bigsqcup_{j\in \J/\J\cap P_{\{s_1,s_2\}}}jY(s_0s_1s_0\tau)\quad\text{and}\quad Y(s_0s_1s_0\tau)\cong Y(s_1\tau)\times \A^1,$$
where $Y(s_1\tau)$ is a classical Deligne-Lusztig variety as in Proposition \ref{spherical}.
Moreover each $jY(s_0s_1s_0\tau)$ is contained in a $\J$-stratum in $G(L)/I$.
\end{lemm}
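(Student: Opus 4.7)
The plan is to apply the Deligne-Lusztig reduction of Proposition \ref{DL method prop} twice, reducing $X_{s_0 s_1 s_0 \tau}(\tau)$ to $X_{s_1 \tau}(\tau)$, which is then handled by Proposition \ref{spherical}. First, I would take $s = s_0$ and apply case (i) of Proposition \ref{DL method prop}: using $\tau s_0 = s_3 \tau$, one has $s_0(s_0 s_1 s_0 \tau) s_0 = s_1 s_0 s_3 \tau$, which has length $3$ (the same as $\ell(s_0 s_1 s_0 \tau)$), since $s_3$ commutes with both $s_0$ and $s_1$. This yields a $\J$-equivariant universal homeomorphism $X_{s_0 s_1 s_0 \tau}(\tau) \cong X_{w_1}(\tau)$ with $w_1 := s_1 s_0 s_3 \tau$. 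Next, I would take $s = s_3$ and apply case (ii) to $w_1$: the commutation of $s_3$ with $s_0, s_1$ together with $\tau s_3 = s_0 \tau$ yields $s_3 w_1 = s_1 s_0 \tau$ of length $2$ and $s_3 w_1 s_3 = s_1 \tau$ of length $1 = \ell(w_1) - 2$. This produces a decomposition $X_{w_1}(\tau) = X_1 \sqcup X_2$ in which $X_1$ is an open $\Gm$-bundle (up to universal homeomorphism) over $X_{s_1 s_0 \tau}(\tau)$ and $X_2$ is a closed $\A^1$-bundle over $X_{s_1 \tau}(\tau)$.

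The key intermediate claim is then $X_{s_1 s_0 \tau}(\tau) = \emptyset$, which forces $X_1 = \emptyset$ and hence $X_{w_1}(\tau) = X_2$. I would establish this via Theorem \ref{empty}: condition (i) is clear, since the $\tau$-closure of $\supp(s_1 s_0) = \{s_0, s_1\}$ is all of $\tS$. For condition (ii), I compute $p(s_1 s_0 \tau) = (1\ 4\ 5)(2\ 6\ 3)$, which as a signed permutation in $W(C_3)$ is a positive $3$-cycle; conjugation by $v = s_3 s_2 s_3$ sends it to $(1\ 2\ 3)(4\ 6\ 5) \in W_{\{s_1, s_2\}}$, whose support is $\{s_1, s_2\} \subsetneq S$. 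The remaining step is to verify $v \in \LP(s_1 s_0 \tau)$ from the definition, using the standard decomposition $s_1 s_0 \tau = \vp^\mu y$ with $y = (1\ 4\ 5)(2\ 6\ 3)$.

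With $X_{w_1}(\tau) = X_2$ an $\A^1$-bundle over $X_{s_1 \tau}(\tau)$, Proposition \ref{spherical} applied to the $\sigma$-Coxeter element $s_1 \tau$ with finite support $\{s_1, s_2\}$ gives $X_{s_1 \tau}(\tau) = \bigsqcup_{j \in \J/\J\cap P_{\{s_1, s_2\}}} j Y(s_1 \tau)$. By the analogue of the fact $Y(s_0 \tau) \subset I s_0 s_3 I / I$ used just before the lemma, $Y(s_1 \tau)$ lies in a single Schubert cell $I s_1 s_2 I / I$, so Remark \ref{trivial} makes the $\A^1$-bundle trivial over each translate $j Y(s_1 \tau)$. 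Pulling back through the universal homeomorphism from the first reduction and using $\J$-equivariance throughout, I obtain the desired decomposition $X_{s_0 s_1 s_0 \tau}(\tau) = \bigsqcup_j j Y(s_0 s_1 s_0 \tau)$ with $Y(s_0 s_1 s_0 \tau) \cong Y(s_1 \tau) \times \A^1$; irreducibility of each component follows from the irreducibility of classical Coxeter-type Deligne-Lusztig varieties. For the $\J$-stratum assertion, I would trace $\inv(j, g)$ along the DL reduction maps: the map $g I \mapsto g_1 I$ varies $g$ only within a fixed Schubert cell, so $\inv(j, g)$ on each $\A^1$-fiber is determined by its image in $X_{s_1 \tau}(\tau)$, where the Coxeter-type analysis guarantees a single $\J$-stratum.

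The main technical obstacle is the emptiness step, where verifying $v = s_3 s_2 s_3 \in \LP(s_1 s_0 \tau)$ requires translating the length-positive condition into root-theoretic inequalities that must be checked for each $\alpha \in \Phi_+$. A secondary challenge is propagating the $\J$-stratum property through the two-stage reduction, since the intermediate element $w_1 \notin \SAdm(\mu)$ lies outside the hypotheses controlled so far, so one must argue by hand that the explicit reduction maps preserve the relevant $\J$-invariants rather than invoke a general principle.
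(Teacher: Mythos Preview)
Your reduction path $s_0s_1s_0\tau \xrightarrow{s_0} s_1s_0s_3\tau \xrightarrow{s_3} s_1\tau$ and subsequent use of Proposition~\ref{spherical} and Remark~\ref{trivial} coincide with the paper's argument. Two points need correction.

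First, the Schubert cell containing $Y(s_1\tau)$ is $I s_1 s_2 s_1 I/I$, not $I s_1 s_2 I/I$: Lusztig's result places a Coxeter-type Deligne--Lusztig variety in the \emph{big} cell, and the longest element of $W_{\{s_1,s_2\}}\cong S_3$ is $s_1s_2s_1$. (In the $Y(s_0\tau)$ case the group $W_{\{s_0,s_3\}}$ is of type $A_1\times A_1$, whose longest element happens to be the product $s_0s_3$.) This slip does not harm the bundle-triviality step, but it is essential for the $\J$-stratum claim: the paper shows that for $y_0\in Y(s_1\tau)$ one has $\inv(j,y_0)=\inv(j,g)\,s_1s_2s_1$ with additive lengths, then uses $\inv(y_0,y)=s_0s_3$ and the fact that $s_1s_2s_1s_0s_3$ is reduced (together with a building-theoretic input guaranteeing $\inv(j,g)$ ends in $s_0s_3$ as an element of $W_a^\tau$) to conclude $\inv(j,y)=\inv(j,g)\,s_1s_2s_1s_0s_3$ is constant on $Y(s_0s_1s_0\tau)$. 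Your sketch (``varies $g$ only within a fixed Schubert cell'') does not supply this length-additivity argument.

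Second, for the emptiness of $X_{s_1s_0\tau}(\tau)$ the paper avoids the root-by-root $\LP$ verification you flag as the main obstacle: it observes $(s_2s_1)\,s_1s_0\tau\,(s_2s_1)^{-1}=s_0s_1\tau=\vp^\mu s_3s_2s_1s_3$, and this element was already shown to have empty affine Deligne--Lusztig variety in Proposition~\ref{genus 3 empty}. Two applications of case~(i) of Proposition~\ref{DL method prop} then give $X_{s_1s_0\tau}(\tau)\cong X_{s_0s_1\tau}(\tau)=\emptyset$ directly.
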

\begin{proof}
We have
$$s_0s_1s_0\tau \xrightarrow{s_0} s_1s_0s_3\tau\xrightarrow{s_3} s_1\tau.$$
Note that $(s_2s_1)s_1s_0\tau(s_2s_1)^{-1}=s_0s_1\tau=\vp^{\mu}s_3s_2s_1s_3$.
Thus $X_{s_1s_0\tau}(\tau)=\emptyset$ by Proposition \ref{DL method prop} (i) and the proof of Proposition \ref{genus 3 empty}.
Let $f\colon X_{s_0s_1s_0\tau}(\tau)\rightarrow X_{s_1\tau}(\tau)$ be the morphism induced by Proposition \ref{DL method prop}.
By Proposition \ref{spherical}, we have
$$X_{s_1\tau}(\tau)=\bigsqcup_{j\in\J/\J \cap P_{\{s_1,s_2\}}}jY(s_1\tau).$$
We set $Y(s_0s_1s_0\tau)=f^{-1}(Y(s_1\tau))$.
By \cite[Corollary 2.5]{Lusztig76} (see also \cite[Proposition 1.1]{Gortz19}), $Y(s_1\tau)\subset Is_1s_2s_1I/I$.
Clearly $s_1s_2s_1s_3s_0$ is a reduced expression.
By Remark \ref{trivial}, we have
$$X_{s_0s_1s_0\tau}(\tau)=\bigsqcup_{j\in \J/\J\cap P_{\{s_1,s_2\}}}jY(s_0s_1s_0\tau)\quad\text{and}\quad Y(s_0s_1s_0\tau)\cong Y(s_1\tau)\times \A^1.$$
Since $Y(s_1\tau)$ is an irreducible component of $X_{s_1\tau}(\tau)$, $Y(s_0s_1s_0\tau)$ is also an irreducible component of $X_{s_0s_1s_0\tau}(\tau)$.

It remains to show that for all $j\in \J$ and $w\in \SAdm(\mu)_0$, the value $\inv(j,-)$ is constant on each $j'Y(w)$.
For this, we argue similarly as \cite[\S 3.3]{Gortz19}.
Clearly we may assume $j'=1$.
For any $j\in \J$, there exists $\tilde j\in \J^0$ such that $\inv(j, \tilde j)\in\Omega$.
Thus we may also assume $j\in \J^0$.
Then by \cite[Corollary 2.5]{Lusztig76} and \cite[Proposition 5.34]{AB08} (see also \cite[Proposition 1.7]{Gortz19}), there exists $gI$ with $g\in \J\cap P_{\{s_1,s_2\}}$ such that for any $y_0I\in Y(s_1\tau)$, we have
\begin{align*}
\inv(j, y_0)=\inv(j,g)s_1s_2s_1(\in W_a)\quad \text{with}\quad \ell(\inv(j, y_0))=\ell(\inv(j,g))+3.
\end{align*}
In particular, $\inv(j, g)$ has a reduced expression as an element of $W_a^{\tau}$ whose rightmost simple reflection is $s_0s_3$.
Let $y\in Y(s_0s_1s_0\tau)$ and set $y_0=f(y)\in Y(s_1\tau)$.
Note that $\inv(y_0,y)=s_0s_3$ (cf.\ the comment right after Proposition \ref{DL method prop}) and
\begin{align*}
&\ell(\inv(j, y_0)s_0s_3)=\ell(\inv(j,g))+3+2.
\end{align*}
Thus $\inv(j, y)=\inv(j, y_0)\inv(y_0,y)=\inv(j,g)s_1s_2s_1s_0s_3$ is independent of $y\in Y(w_k)$.
\end{proof}

Similarly as the proof of Lemma \ref{genus 3 Iwahori}, we can show that $jY(s_0\tau)$ is contained in a $\J$-stratum in $G(L)/I$.
The following theorem is our main result in the case of genus 3:
\begin{theo}
\label{main theo}
We have $\pi(Y(s_0s_1s_0\tau))\cong Y(s_1\tau)\times \A^1$, $\pi(Y(s_0\tau))\cong Y(s_0\tau)$ and
$$X_\mu(\tau)=\bigsqcup_{j\in \J/\J\cap P_{\{s_1,s_2\}}}j\pi(Y(s_0s_1s_0\tau))\sqcup \bigsqcup_{j\in \J/\J\cap P_{\{s_0,s_3\}}}j\pi(Y(s_0\tau))\sqcup \bigsqcup_{j\in\J/\J \cap P_{\{s_1,s_2\}}}j\{pt\}.$$
Moreover, each stratum is a $\J$-stratum in $G(L)/K$, and the closure relation can be described in terms of $\cB(\J,F)$.
\end{theo}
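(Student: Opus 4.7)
The plan is to refine the Ekedahl--Oort decomposition
$$X_\mu(\tau)=\pi(X_{s_0s_1s_0\tau}(\tau))\sqcup \pi(X_{s_0\tau}(\tau))\sqcup \pi(X_\tau(\tau))$$
supplied by Proposition \ref{genus 3 empty}, treating each Ekedahl--Oort stratum separately. For $w=\tau$, the Iwahori-level variety $X_\tau(\tau)$ is the discrete $\J$-orbit through the base point; applying Proposition \ref{spherical} with $\supp_\sigma(\tau)=\emptyset$ and $S_\tau=\{s_1,s_2\}$ (the unique maximal $\Ad(\tau)$-stable subset of $S$, since $\tau$ exchanges $s_1\leftrightarrow s_2$ and sends $s_3$ outside $S$) yields $\pi(X_\tau(\tau))=\bigsqcup_{j\in \J/\J\cap P_{\{s_1,s_2\}}} j\{pt\}$. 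For $w=s_0\tau$, a $\sigma$-Coxeter element in the finite group $W_{\{s_0,s_3\}}$, Proposition \ref{spherical} directly gives both $\pi(Y(s_0\tau))\cong Y(s_0\tau)$ and the indexing of its $\J$-orbits by $\J/(\J\cap P_{\{s_0,s_3\}})$.

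For the top stratum, Lemma \ref{genus 3 Iwahori} already supplies the Iwahori-level decomposition of $X_{s_0s_1s_0\tau}(\tau)$ as a disjoint union of $\J$-translates of $Y(s_0s_1s_0\tau)\cong Y(s_1\tau)\times\A^1$. To descend to $G(L)/K$, the key point is that $s_0s_1s_0\tau\in \SW$, so the projection $\pi$ restricted to the Schubert cell $I(s_0s_1s_0\tau)I/I$ is a locally closed immersion into the affine Grassmannian; in particular $\pi$ is injective on $Y(s_0s_1s_0\tau)$, and one obtains $\pi(Y(s_0s_1s_0\tau))\cong Y(s_1\tau)\times\A^1$. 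The pairwise disjointness of the three projected pieces of $X_\mu(\tau)$ then follows immediately from Lemma \ref{closed}, which forces $\pi^{-1}(\pi(X_w(\tau)))\cap X_{w'}(\tau)=\emptyset$ whenever $w\neq w'$ in $\SAdm(\mu)_0$.

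For the $\J$-stratification, I plan to show that each piece $j\pi(Y(w))$ is precisely a $\J$-stratum of $G(L)/K$. The argument in the last paragraph of the proof of Lemma \ref{genus 3 Iwahori} establishes that for every $j'\in \J$ the Iwahori-level invariant $\inv(j',-)$ is constant on $jY(w)$; the same reasoning applies verbatim to $Y(s_0\tau)$ and $Y(\tau)$. Since $\inv_K(j',-)$ is determined by $\inv(j',-)$ via the map $IwI\mapsto K\vp^{\inv_K}K$, constancy passes to the $K$-level on each $j\pi(Y(w))$. To see that distinct pieces lie in distinct $\J$-strata, I would exhibit, for each pair $(j,w)\neq (j',w')$, an element of $\J$ whose $\inv_K$-value separates them; the separation is guaranteed by the fact that the parahorics $\J\cap P_{\{s_1,s_2\}}$, $\J\cap P_{\{s_0,s_3\}}$ and $\J\cap I$ correspond to distinct facets of $\cB(\J,F)$, and a suitably chosen $j'\in \J^0$ close to each facet records this difference in its invariant.

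The main obstacle is the closure relation. My plan is to exploit the equidimensionality of $X_\mu(\tau)$ established in \cite{HV12} and \cite{Takaya22}. First I would verify by direct computation on $\SAdm(\mu)_0=\{s_0s_1s_0\tau, s_0\tau, \tau\}$ that the partial order $\geq_S$ reduces to the chain $s_0s_1s_0\tau\geq_S s_0\tau\geq_S \tau$, matching the dimension ordering of the strata. Equidimensionality then forces each top-dimensional irreducible component closure to sweep out all lower-dimensional strata it meets, recovering the expected nontrivial inclusions in the $\J$-stratification. The final and most delicate step is to match the resulting closure relation against condition $(\ast)$ from \S\ref{J-str}, by identifying $\J\cap P_{\{s_1,s_2\}}$, $\J\cap P_{\{s_0,s_3\}}$, $\J\cap I$ with stabilizers of facets in $\cB(\J,F)$ and checking that non-emptiness of $j(\J\cap P_w)\cap j'(\J\cap P_{w'})$ corresponds exactly to neighbor adjacency of the corresponding simplices; this combinatorial check, required for all pairs $(j,w)$, is where I expect the technical core of the proof to lie.
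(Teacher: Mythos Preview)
Your argument for the isomorphism $\pi(Y(s_0s_1s_0\tau))\cong Y(s_1\tau)\times\A^1$ has a genuine gap. You claim that since $s_0s_1s_0\tau\in{}^S\tW$, the projection $\pi$ restricted to the Schubert cell $I(s_0s_1s_0\tau)I/I$ is a locally closed immersion. This fails on two counts. First, $Y(s_0s_1s_0\tau)$ does not lie in $I(s_0s_1s_0\tau)I/I$: the affine Deligne--Lusztig variety $X_w(\tau)$ is defined by a condition on $g^{-1}\tau\sigma(g)$, not on $g$, and the proof of Lemma~\ref{genus 3 Iwahori} in fact shows $Y(s_0s_1s_0\tau)\subset Is_1s_2s_1s_0s_3I/I$. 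Second, membership in ${}^S\tW$ concerns \emph{left} $W_0$-cosets and carries no information about fibers of $\pi$, which collapses \emph{right} $K$-cosets; the relevant condition would be $w\in\tW^S$. (For a counterexample, take $w=s_0s_1$ for $\GL_3$: then $w\in{}^S\tW$ but $IwI/I$ is two-dimensional while its image $Is_0K/K$ is one-dimensional.) You have also inverted the role of Lemma~\ref{closed}: disjointness of the projected EO strata is already part of the EO decomposition, whereas Lemma~\ref{closed} is exactly what is needed to upgrade injectivity to a universal homeomorphism. The paper's argument runs: injectivity of $X_{s_0s_1s_0\tau}(\tau)\to\pi(X_{s_0s_1s_0\tau}(\tau))$ comes from \cite[Lemma 5.4]{HNY22}; Lemma~\ref{closed} shows the source is closed in $\pi^{-1}$ of the target; since $\pi$ is proper, the induced map is proper and injective, hence a universal homeomorphism.

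Your outline for the closure relation is in the right spirit but omits the decisive computation. Equidimensionality alone does not tell you \emph{which} lower strata lie in the closure of $\pi(Y(s_0s_1s_0\tau))$; you must first \emph{exclude} the wrong ones. The paper does this by an explicit $\inv_K$ estimate: if $(\J\cap P_{\{s_1,s_2\}})\cap j(\J\cap P_{\{s_0,s_3\}})=\emptyset$ then, after normalizing the representative $j$, one has $s_0\in\supp(\inv(1,j))$ and hence $j\pi(Y(s_0\tau))\subset K\vp^{\mu_j}K/K$ with $(1,1,0,0,-1,-1)\preceq\mu_j$, while $\pi(Y(s_0s_1s_0\tau))\subset K\vp^{(1,0,0,0,0,-1)}K/K$; these $K$-double cosets have disjoint closures, forcing $j\pi(Y(s_0\tau))\cap\overline{\pi(Y(s_0s_1s_0\tau))}=\emptyset$. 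Only after this exclusion does equidimensionality produce some $j_0$ with $j_0\pi(Y(s_0\tau))\subset\overline{\pi(Y(s_0s_1s_0\tau))}$, and the $(\J\cap P_{\{s_1,s_2\}})$-action then propagates this to all $j$ satisfying~$(\ast)$. The same $\inv_K$ separation is what makes the $\J$-stratum verification work; your plan to ``exhibit an element of $\J$'' is correct in principle, but the actual separating invariant is this dominance comparison of $K$-double cosets, not an abstract facet-adjacency check.
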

\begin{proof}
By Lemma \ref{genus 3 empty}, we have
$$X_\mu(\tau)=\bigsqcup_{w\in \SAdm(\mu)_0}\pi(X_w(\tau))=\pi(X_{s_0s_1s_0\tau}(\tau))\sqcup \pi(X_{s_0\tau}(\tau))\sqcup \pi(X_{\tau}(\tau)).$$
Note that $S_{s_0\tau}=\emptyset$ and $S_{\tau}=\{s_1,s_2\}$.
By Proposition \ref{spherical}, we also have $\pi(Y(s_0\tau))\cong Y(s_0\tau)$,
$$\pi(X_{s_0\tau}(\tau))=\bigsqcup_{j\in \J/\J\cap P_{\{s_0,s_3\}}}j\pi(Y(s_0\tau))\quad \text{and}\quad \pi(X_{\tau}(\tau))=\bigsqcup_{j\in\J/\J \cap P_{\{s_1,s_2\}}}j\{pt\}.$$
By Lemma \ref{closed}, $X_{s_0s_1s_0\tau}(\tau)$ is closed in $\pi^{-1}(\pi(X_{s_0s_1s_0\tau}(\tau)))$.
By \cite[Lemma 5.4]{HNY22}, the map $X_{s_0s_1s_0\tau}(\tau)\rightarrow \pi(X_{s_0s_1s_0\tau}(\tau))$ induced by $\pi$ is injective.
Since $\pi$ is proper and $X_{s_0s_1s_0\tau}(\tau)$ is closed in $\pi^{-1}(\pi(X_{s_0s_1s_0\tau}(\tau)))$, the map $X_{s_0s_1s_0\tau}(\tau)\rightarrow \pi(X_{s_0s_1s_0\tau}(\tau))$ is also proper.
Thus this map is a universal homeomorphism.
Therefore $\pi(Y(s_0s_1s_0\tau))\cong Y(s_1\tau)\times \A^1$ and
$$\pi(X_{s_0s_1s_0\tau}(\tau))=\bigsqcup_{j\in \J/\J\cap P_{\{s_1,s_2\}}}j\pi(Y(s_0s_1s_0\tau))$$
by Lemma \ref{genus 3 Iwahori}.

We next prove the closure relation, i.e.\ we show that for any $j \in \J$, $\overline{j\pi(Y(w))}\supseteq j'\pi(Y(w'))$ if and only if the condition $(\ast)$ in Subsection \ref{J-str} holds true. We may assume that $j = 1$ and we replace $j'$ with $j$ in the following.
We will prove the following two assertions:
\begin{enumerate}[(1)]
\item If $(1,s_0s_1s_0\tau)\geq (j,s_0\tau)$ (i.e., $(\J\cap P_{\{s_1,s_2\}})\cap j(\J\cap P_{\{s_0,s_3\}})\neq \emptyset$), then $j\pi(Y(s_0\tau))\subset \overline{\pi(Y(s_0s_1s_0\tau))}$.
\item Otherwise, $j\pi(Y(s_0\tau))\cap\overline{\pi(Y(s_0s_1s_0\tau))}=\emptyset$.
\end{enumerate}
Clearly $(1,s_0s_1s_0\tau)\ngeq (j,s_0\tau)$ and $j\pi(Y(s_0\tau))\cap\overline{\pi(Y(s_0s_1s_0\tau))}=\emptyset$ if $j\in \J\setminus \J^0$.
Thus we may and do assume that $j\in \J^0$.
By replacing $j$ by another representative in $j(\J\cap P_{\{s_0,s_3\}})$  if necessary, we may also assume that $\inv(1,j)$ has a reduced expression as an element of $W_a^{\tau}$ whose rightmost simple reflection is $s_1s_2s_1$ unless $\inv(1,j)=1$.

Recall that $Y(s_0\tau)$ is contained in $Is_0s_3I/I$.
Thus there exists $\mu_j\in \Y_+$ such that $j\pi(Y(s_0\tau))\subset K\vp^{\mu_j}K/K$.
Note that $(\J\cap P_{\{s_1,s_2\}})\cap j(\J\cap P_{\{s_0,s_3\}})\neq \emptyset$ is equivalent to $\inv(1,j)=1$ or $s_1s_2s_1$.
Moreover, if $(\J\cap P_{\{s_1,s_2\}})\cap j(\J\cap P_{\{s_0,s_3\}})=\emptyset$, then $s_0$ (and hence $s_3$) belongs to $\supp(\inv(1,j))$.
Combining this with \cite[(2.7.11)]{Macdonald03}, we deduce that if $(\J\cap P_{\{s_1,s_2\}})\cap j(\J\cap P_{\{s_0,s_3\}})=\emptyset$, then $(1,1,0,0,-1,-1)\pc \mu_j$.
By the proof of Lemma \ref{genus 3 Iwahori}, we have $\pi(Y(s_0s_1s_0\tau))\subset \pi(Is_1s_2s_1s_0s_3I/I)\subset K\vp^{(1,0,0,0,0,-1)}K/K$.
These imply (2).

The irreducibility of $\pi(Y(s_0s_1s_0\tau))$ and $\pi(Y(s_0\tau))$ combined with the equidimensionality of $X_\mu(\tau)$ (cf.\ \cite{HV12} and \cite{Takaya22}) implies that there exists $j_0\in \J^0$ such that $j_0\pi(Y(s_0\tau))\subset \overline{\pi(Y(s_0s_1s_0\tau))}$.
This $j_0$ must satisfy $(\J\cap P_{\{s_1,s_2\}})\cap j_0(\J\cap P_{\{s_0,s_3\}})\neq \emptyset$.
Thus by multiplying $j_0\pi(Y(s_0\tau))$ by elements in $\J\cap P_{\{s_1,s_2\}}$, we deduce that if $(\J\cap P_{\{s_1,s_2\}})\cap j(\J\cap P_{\{s_0,s_3\}})\neq \emptyset$, then $j\pi(Y(s_0\tau))\subset \overline{\pi(Y(s_0s_1s_0\tau))}$.
This proves (1).

Similarly, we will prove the following assertions:
\begin{enumerate}[(1)]
\setcounter{enumi}{-1}
\item If $j\notin \J\cap P_{\{s_1,s_2\}}$ (i.e., $j\pi(X_{s_0s_1s_0\tau}(\tau))\neq \pi(X_{s_0s_1s_0\tau}(\tau))$), then $j\pi(X_{s_0s_1s_0\tau}(\tau))\cap \overline{\pi(X_{s_0s_1s_0\tau}(\tau))}=\emptyset$.
\end{enumerate}
\begin{enumerate}[(1)']
\item If $(1,s_0s_1s_0\tau)\geq (j, \tau)$, then $j\pi(Y(\tau))\subset \overline{\pi(Y(s_0s_1s_0\tau))}$.
\item Otherwise, $j\pi(Y(\tau))\cap\overline{\pi(Y(s_0s_1s_0\tau))}=\emptyset$.
\end{enumerate}
The statement of (1)' follows from the above discussion and the proof of \cite[Theorem 7.2.1]{GH15}.
By replacing $j$ by another representative in $j(\J\cap P_{\{s_1,s_2\}})$ if necessary, we may assume that $\inv(1,j)$ has a reduced expression as an element of $W_a^{\tau}$ whose rightmost simple reflection is $s_0s_3$ unless $\inv(1,j)=1$.
Thus if $j\notin \J\cap P_{\{s_1,s_2\}}$, then similarly as above, there exists $(1,1,0,0,-1,-1)\pc \mu_j$ such that $j\pi(X_{s_0s_1s_0\tau}(\tau))\subset K\vp^{\mu_j}K/K$.
This proves (0).
If $(1,s_0s_1s_0\tau)\geq (j, \tau)$, then $\inv(1,j)=1, s_0s_3$ or $s_1s_2s_1s_0s_3$.
Thus (2)' also follows in the same way as above.
The closure relation follows from (0), (1), (2), (1)', (2)' and the proof of \cite[Theorem 7.2.1]{GH15}.

It remains to show that each stratum is a $\J$-stratum.
This is clearly true for $j\{pt\}$.
Here we only show that $\pi(Y(s_0s_1s_0\tau))$ is a $\J$-stratum.
Proofs for remaining strata are similar.
Recall that $\pi(Y(s_0s_1s_0\tau))\subset K\vp^{(1,0,0,0,0,-1)}K/K$ and $j\pi(X_{s_0s_1s_0\tau}(\tau))\subset K\vp^{\mu_j}K/K$ with $(1,1,0,0,-1,-1)\pc \mu_j$.
Thus to show that $\pi(Y(s_0s_1s_0\tau))$ is a $\J$-stratum, we need to show that for $j\in \J^0$ such that $j\pi(Y(s_0\tau))\subset K\vp^{(1,0,0,0,0,-1)}K/K$, $j\pi(Y(s_0\tau))$ is contained in a different $\J$-stratum from $\pi(Y(s_0s_1s_0\tau))$.
Similarly as above, we may assume that $j\in \J\cap P_{\{s_1,s_2\}}$.
Let $j'\in \J^0\setminus (\J\cap P_{\{s_1,s_2\}})$ such that $j(\J\cap P_{\{s_0,s_3\}})\cap j'(\J\cap P_{\{s_1,s_2\}})\neq \emptyset$.
Let $y\in Y(s_0s_1s_0\tau)$ and set $y_0=f(y)\in Y(s_1\tau)$ as in the proof of Lemma \ref{genus 3 Iwahori}.
Then there exists $gI$ with $g\in \J\cap P_{\{s_1,s_2\}}$ such that
\begin{align*}
\inv(j', y_0)=\inv(j',g)s_1s_2s_1(\in W_a)\quad \text{with}\quad \ell(\inv(j', y_0))=\ell(\inv(j',g))+3.
\end{align*}
Thus $\inv(j',Y(s_0s_1s_0\tau))=\inv(j',y)=\inv(j',g)s_1s_2s_1s_0s_3$.
Moreover, by the assumption on $j'$, we have $\inv(j',g)\neq 1$ (i.e., $s_0$ belongs to $\supp(\inv(j',g))$).
This implies that $(1,1,0,0,-1,-1)\pc\inv_K(j', \pi(Y(s_0s_1s_0\tau)))$.
On the other hand, by $j'^{-1}j(\J\cap P_{\{s_0,s_3\}})\cap (\J\cap P_{\{s_1,s_2\}})\neq \emptyset$, we have $\inv_K(j',j\pi(Y(s_0\tau)))\pc (1,0,0,0,0,-1)$.
Therefore $j\pi(Y(s_0\tau))$ is contained in a different $\J$-stratum from $\pi(Y(s_0s_1s_0\tau))$.
This finishes the proof.
\end{proof}

\section{The Case of Genus 4}
\label{genus 4}
In this section, we set $G=\GSp_8$ and $\mu=(1^{(4)},0^{(4)})\in X_*(T)$.
Recall that $W_0$ is the subgroup of the symmetric group of degree $6$ generated by
$$s_1=(1\ 2)(7\ 8),\quad s_2=(2\ 3)(6\ 7),\quad s_3=(3\ 4)(5\ 6)\quad s_4=(4\ 5).$$
Moreover, the affine Weyl group $W_{a}$ is generated by $s_1, s_2, s_3, s_4$ and the simple affine reflection 
\[
s_{0} = \varpi^{(1^{(1)}, 0^{(6)}, (-1)^{(1)})} (1\ 8) \in W_{a}.
\]
Set $\tau=\tau_\mu=\vp^{\mu}s_4s_3s_2s_1s_4s_3s_2s_4s_3s_4=\vp^\mu(1\ 5)(2\ 6)(3\ 7)(4\ 8)\in \Omega$ and $\J=\J_\tau$.
Then $$\tau s_1\tau^{-1}=s_3,\quad \tau s_2\tau^{-1}=s_2,\quad\tau s_4\tau^{-1}=s_0.$$
Thus there are two $\tau$-orbit in $\tS$, namely, $\{s_0,s_4\}$, $\{s_1,s_3\}$ and $\{s_2\}$.
Then $W_a^{\tau}$ is the Weyl group with two simple reflections $s_0s_4$, $s_1s_3$ and $s_2$.
For $j,j'\in \J^0$, we have $\inv(j,j')\in W_a^{\tau}$.

\begin{lemm}
\label{genus 4 empty}
We have 
\begin{align*}
\SAdm(\mu)_0=\{s_0s_1s_2s_0s_1s_0\tau, s_0s_1s_2s_0\tau,s_0s_1s_0\tau,s_0s_1\tau,s_0\tau,\tau\}.
\end{align*}
\end{lemm}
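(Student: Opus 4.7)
The plan is to imitate the proof of Proposition \ref{genus 3 empty}: first enumerate $\SAdm(\mu)$ explicitly, and then apply Theorem \ref{empty} element by element. Since $\mu$ is minuscule we have $\SAdm(\mu)=\SW\cap W_0\vp^\mu W_0$, and a direct case enumeration analogous to the one carried out in the genus-$3$ case produces the full list of elements together with a reduced expression for each.

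To detect the non-empty strata, I would split the list according to whether $\supp_\sigma(w)=\tS$. Since the $\tau$-orbits on $\tS$ are $\{s_0,s_4\}$, $\{s_1,s_3\}$ and $\{s_2\}$, the $\sigma$-support is a proper subset of $\tS$ exactly when at least one of these three orbits is avoided by $\supp(w)$; in particular $\supp_\sigma(w)\neq\tS$ whenever $s_2\notin\supp(w)$. This puts $\tau$, $s_0\tau$, $s_0s_1\tau$ and $s_0s_1s_0\tau$ into the non-empty class via Theorem \ref{empty}, because $W_{\supp_\sigma(w)}$ is then finite. For the remaining two listed elements $s_0s_1s_2s_0\tau$ and $s_0s_1s_2s_0s_1s_0\tau$, where $\supp_\sigma(w)=\tS$, I would compute $p(w)$ directly and verify that every $W_0$-conjugate of $p(w)$ has full support in $W_0$, so that condition (ii) of Theorem \ref{empty} fails; this is the analogue of the Coxeter observation used for $\vp^\mu s_3s_2s_1$ in Proposition \ref{genus 3 empty}.

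For each of the remaining elements of $\SAdm(\mu)$, all of which satisfy $\supp_\sigma(w)=\tS$, I would exhibit an explicit $v\in\LP(w)$ such that $\supp(v^{-1}p(w)v)\subsetneq S$, so that Theorem \ref{empty} yields $X_w(\tau)=\emptyset$. The candidate $v$ is extracted from the standard decomposition $w=x\vp^\lambda y$ with $\vp^\lambda y\in\SW$: one always has $y^{-1}\in\LP(w)$, and a small number of modifications of $y^{-1}$ are tested against the defining inequalities $\la v\alpha,y^{-1}\mu\ra+\delta^+(v\alpha)-\delta^+(xyv\alpha)\geq 0$ until a conjugate of $p(w)$ missing some simple reflection is obtained.

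The main obstacle will be the combinatorial bookkeeping required to carry out these case checks: for each element to be ruled out, one must compute $p(w)$, identify suitable length-positive elements, and verify the support condition on the conjugate. No new conceptual input beyond Theorem \ref{empty} and the length-positive framework of \S\ref{LP} is needed; the argument scales directly from rank $3$ to rank $4$ but is appreciably more tedious, and the principal risk is simply making an arithmetic mistake in one of the case analyses.
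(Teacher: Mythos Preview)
Your proposal is correct and follows essentially the same strategy as the paper's proof: enumerate $\SAdm(\mu)$, dispose of the elements with $\supp_\sigma(w)=\tS$ and a length-positive witness for condition (ii) of Theorem \ref{empty}, and observe that $\tau,\,s_0\tau,\,s_0s_1\tau,\,s_0s_1s_0\tau$ all have proper $\sigma$-support.

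The only point where you deviate from the paper is the non-emptiness of $X_{s_0s_1s_2s_0\tau}(\tau)$. You propose to check directly that every $W_0$-conjugate of $p(s_0s_1s_2s_0\tau)=s_4s_3s_2s_1s_4s_3$ has full support; this does work, since that element is elliptic in $W(C_4)$ (its signed cycle type is one negative $3$-cycle and one negative $1$-cycle). The paper instead applies Proposition \ref{DL method prop} along the path $s_0s_1s_2s_0\tau\xrightarrow{s_0}s_1s_2s_0s_4\tau\xrightarrow{s_4}s_1s_2\tau$ and uses $\supp_\sigma(s_1s_2\tau)\subsetneq\tS$. Both arguments are short; the paper's has the incidental benefit that the same reduction path is reused later in Lemma \ref{genus 4 Iwahori}.
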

\begin{proof}
It is straightforward to check that $\SAdm(\mu)$ is equal to
\begin{align*}
\{&\vp^{\mu}, \vp^{\mu} s_4,\vp^{\mu}s_4s_3,\vp^{\mu}s_4s_3s_2,\vp^{\mu}s_4s_3s_4,\vp^{\mu}s_4s_3s_4s_2,\vp^{\mu}s_4s_3s_4s_2s_3,\vp^{\mu}s_4s_3s_4s_2s_3s_4,\\
&\vp^\mu s_4s_3s_2s_1,\vp^\mu s_4s_3s_2s_1s_4,\vp^\mu s_4s_3s_2s_1s_4s_3,\vp^\mu s_4s_3s_2s_1s_4s_3s_4,\vp^\mu s_4s_3s_2s_1s_4s_3s_2\\
&\vp^\mu s_4s_3s_2s_1s_4s_3s_2s_4,\vp^\mu s_4s_3s_2s_1s_4s_3s_2s_4s_3,\vp^\mu s_4s_3s_2s_1s_4s_3s_2s_4s_3s_4\}.
\end{align*}
If $w=\vp^{\mu}, \vp^{\mu} s_4,\vp^{\mu}s_4s_3,\vp^{\mu}s_4s_3s_2,\vp^{\mu}s_4s_3s_4,\vp^{\mu}s_4s_3s_4s_2,\vp^{\mu}s_4s_3s_4s_2s_3$ or $\vp^{\mu}s_4s_3s_4s_2s_3s_4$, then $\supp_\sigma(w)=\tS$ and $\supp(p(w))\subsetneq S$.
Thus $X_w(\tau)=\emptyset$ by Theorem \ref{empty}.
It is easy to check that $s_4s_1s_2s_3\in \LP(\vp^\mu s_4s_3s_2s_1s_4)$ and $s_4s_3s_4s_1s_2\in \LP(\vp^\mu s_4s_3s_2s_1s_4s_3s_4)$.
Since $\supp_\sigma(\vp^\mu s_4s_3s_2s_1s_4)=\supp_\sigma(\vp^\mu s_4s_3s_2s_1s_4s_3s_4)=\tS$,
\begin{align*}
&(s_4s_1s_2s_3)^{-1}s_4s_3s_2s_1s_4(s_4s_1s_2s_3)=s_3s_2s_1\quad\text{and}\\
&(s_4s_3s_4s_1s_2)^{-1}s_4s_3s_2s_1s_4s_3s_4(s_4s_3s_4s_1s_2)=s_2s_1s_4,
\end{align*}
both $X_{\vp^\mu s_4s_3s_2s_1s_4}(\tau)$ and $X_{\vp^\mu s_4s_3s_2s_1s_4s_3s_4}(\tau)$ are empty.
Thus 
\begin{align*}
\SAdm(\mu)_0\subseteq\{&\vp^\mu s_4s_3s_2s_1,\vp^\mu s_4s_3s_2s_1s_4s_3,\vp^\mu s_4s_3s_2s_1s_4s_3s_2,\\
&\vp^\mu s_4s_3s_2s_1s_4s_3s_2s_4,\vp^\mu s_4s_3s_2s_1s_4s_3s_2s_4s_3,\vp^\mu s_4s_3s_2s_1s_4s_3s_2s_4s_3s_4\}\\
=\{&s_0s_1s_2s_0s_1s_0\tau, s_0s_1s_2s_0\tau, s_0s_1s_0\tau,s_0s_1\tau,s_0\tau,\tau\}.
\end{align*}
For $w=s_0s_1s_0\tau,s_0s_1\tau,s_0\tau$ and $\tau$, we have $\supp_\sigma(w)\subsetneq \tS$.
We also have $\supp_\sigma(s_1s_2\tau)\subsetneq \tS$ and $$s_0s_1s_2s_0\tau \xrightarrow{s_0} s_1s_2s_0s_4\tau\xrightarrow{s_4} s_1s_2\tau.$$
Since $s_4s_3s_2s_1$ is a Coxeter element, we have $\supp(v^{-1}p(s_0s_1s_2s_0s_1s_0\tau)v)=S$ for any $v\in W_0$.
Thus, by Proposition \ref{DL method prop} and Theorem \ref{empty}, the above inclusion is an equality.
This finishes the proof.
\end{proof}

For $w=s_0s_1s_0\tau,s_0s_1\tau,s_0\tau$ and $\tau$, the description of $X_w(\tau)$ follows from Proposition \ref{spherical}.
In particular, they can be written as disjoint unions of Deligne-Lusztig varieties.
\begin{lemm}
\label{genus 4 Iwahori}
Let $w$ be $s_0s_1s_2s_0s_1s_0\tau$ or $s_0s_1s_2s_0\tau$.
Set $e_w=s_0s_3\tau$ (resp.\ $s_1s_2\tau$) in the former (resp.\ latter) case.
Then there exists an irreducible component $Y(w)$ of $X_{w}(\tau)$ such that
$$X_{w}(\tau)=\bigsqcup_{j\in \J/\J\cap P_{\supp_\sigma(e_w)}}jY(w)\quad\text{and}\quad Y(w)\cong Y(e_w)\times \A^{\frac{\ell(w)}{2}-1},$$
where $Y(e_w)$ is a classical Deligne-Lusztig variety as in Proposition \ref{spherical}.
\end{lemm}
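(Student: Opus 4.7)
The plan is to mimic Lemma \ref{genus 3 Iwahori}, iteratively applying the Deligne-Lusztig reduction method (Proposition \ref{DL method prop}) to produce a chain
$$w = w_0 \to w_1 \to \cdots \to w_k = e_w$$
in $W_a \tau$, where each transition $w_{i-1} \to w_i = sw_{i-1}s$ (for some $s \in \tS$) is either length-preserving (giving a universal homeomorphism $X_{w_{i-1}}(\tau) \cong X_{w_i}(\tau)$ via Proposition \ref{DL method prop}(i)) or length-decreasing by two (giving via Proposition \ref{DL method prop}(ii) a decomposition $X_{w_{i-1}}(\tau) = X_1 \sqcup X_2$, where $X_2$ is an $\A^1$-bundle over $X_{w_i}(\tau)$ and $X_1$ is a $\Gm$-bundle over $X_{sw_{i-1}}(\tau)$). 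The crucial point at each length-decreasing step is to verify that $X_{sw_{i-1}}(\tau) = \emptyset$, which forces $X_{w_{i-1}}(\tau) = X_2$ and yields a genuine $\A^1$-bundle $X_{w_{i-1}}(\tau) \to X_{w_i}(\tau)$. Composing over the chain produces an iterated $\A^1$-bundle $X_w(\tau) \to X_{e_w}(\tau)$ of relative dimension $\frac{\ell(w) - \ell(e_w)}{2} = \frac{\ell(w)}{2} - 1$.

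For $w = s_0 s_1 s_2 s_0 \tau$ we take the chain
$$s_0 s_1 s_2 s_0 \tau \xrightarrow{s_0} s_1 s_2 s_0 s_4 \tau \xrightarrow{s_4} s_1 s_2 \tau = e_w,$$
where the first step preserves length (using $\tau s_0 \tau^{-1} = s_4$ together with the pairwise commutations among $s_0, s_1, s_2, s_4$) and the second drops length by two. The open-part target at the second step is $X_{s_1 s_2 s_0 \tau}(\tau)$, whose emptiness we verify either via Theorem \ref{empty} (by producing $v \in \LP(s_1 s_2 s_0 \tau)$ with $\supp(v^{-1} p(s_1 s_2 s_0 \tau) v) \subsetneq S$) or by conjugating $s_1 s_2 s_0 \tau$ to an element of $\SAdm(\mu) \setminus \SAdm(\mu)_0$ already handled in Lemma \ref{genus 4 empty}. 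For $w = s_0 s_1 s_2 s_0 s_1 s_0 \tau$ (length $6$), a longer chain with two length-decreasing steps is required; each intermediate open-part target is shown empty by analogous arguments (in particular, one of them is a conjugate of the length-five element $\vp^\mu s_4 s_3 s_2 s_1 s_4$ already shown empty in the proof of Lemma \ref{genus 4 empty}).

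To upgrade the iterated $\A^1$-bundle to a trivial bundle, we invoke Remark \ref{trivial}, which guarantees that each Deligne-Lusztig reduction morphism is a trivial $\A^1$-bundle over any Schubert cell $IvI/I$. By Proposition \ref{spherical}, $X_{e_w}(\tau) = \bigsqcup_{j \in \J/\J \cap P_{\supp_\sigma(e_w)}} jY(e_w)$ and each $Y(e_w)$ lies in a single Schubert cell (namely the one corresponding to the $\sigma$-Coxeter element in $W_{\supp_\sigma(e_w)}$). Consequently the iterated bundle is trivial over each $jY(e_w)$. Defining $Y(w)$ as the preimage of $Y(e_w)$ under the composed reduction map $X_w(\tau) \to X_{e_w}(\tau)$ yields $Y(w) \cong Y(e_w) \times \A^{\ell(w)/2 - 1}$, and the $\J$-equivariance of Proposition \ref{DL method prop} gives the disjoint-union decomposition indexed by $\J/\J \cap P_{\supp_\sigma(e_w)}$.

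The main technical obstacle is identifying reduction chains whose intermediate open-part targets all have empty affine Deligne-Lusztig varieties. This requires careful bookkeeping of the braid and commutation relations in $\tW$ and of the $\tau$-action on $\tS$, together with repeated applications of Theorem \ref{empty} or explicit conjugations reducing unfamiliar elements to cases already shown empty in Lemma \ref{genus 4 empty}.
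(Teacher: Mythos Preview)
Your approach matches the paper's: the same reduction chain for $w=s_0s_1s_2s_0\tau$, the same emptiness mechanism for the open parts (conjugation back to elements already handled in Lemma \ref{genus 4 empty}), and the same appeal to Remark \ref{trivial}. Two points need correction.

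First, by \cite[Corollary 2.5]{Lusztig76} the Schubert cell containing $Y(e_w)$ is that of the \emph{longest} element of $W_{\supp_\sigma(e_w)}$, not of the $\sigma$-Coxeter element: concretely $Is_1s_2s_3s_2s_1s_2I/I$ for $e_w=s_1s_2\tau$ and $Is_0s_1s_0s_1s_3s_4s_3s_4I/I$ for $e_w=s_0s_1\tau$. Getting this right is what makes the next point work.

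Second, for $w=s_0s_1s_2s_0s_1s_0\tau$ your sentence ``Consequently the iterated bundle is trivial over each $jY(e_w)$'' hides a genuine gap. Remark \ref{trivial} trivializes a \emph{single} reduction map over a Schubert cell; to apply it again at the second $\A^1$-step you must know that the intermediate preimage---after the first $\A^1$-bundle and the subsequent length-preserving conjugations---again lies in a single Schubert cell. This is not automatic: under each reduction map $gI\mapsto g_1I$ with $\inv(g,g_1)=s$, a cell $IvI/I$ pulls back into the single cell $IvsI/I$ only when $\ell(vs)=\ell(v)+1$. Hence one must check that appending the reduction-path reflections (in reverse order) to the longest word above stays reduced at every stage. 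The paper carries this out explicitly, verifying that $s_0s_1s_0s_1s_3s_4s_3s_4\cdot s_2s_3s_2s_1s_4s_0$ is reduced (and similarly $s_1s_2s_3s_2s_1s_2\cdot s_4s_0$ in the shorter case). Without this reduced-word check your triviality claim is unjustified.
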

\begin{proof}
Let $w=s_0s_1s_2s_0\tau$.
We have $$s_0s_1s_2s_0\tau \xrightarrow{s_0} s_1s_2s_0s_4\tau\xrightarrow{s_4} e_w.$$
It is straightforward to check that $(s_0s_4)s_1s_2s_0\tau(s_0s_4)^{-1}=s_0s_1s_2\tau=\vp^{\mu}s_4s_3s_2s_1s_4s_3s_4$.
Thus $X_{s_1s_2s_0\tau}(\tau)=\emptyset$ by Proposition \ref{DL method prop} (i) and the proof of Lemma \ref{genus 4 empty}.
Let $f\colon X_{w}(\tau)\rightarrow X_{e_w}(\tau)$ be the morphism induced by Proposition \ref{DL method prop}.
By Proposition \ref{spherical}, we have
$$X_{e_w}(\tau)=\bigsqcup_{j\in\J/\J \cap P_{\supp_\sigma(e_w)}}jY(e_w).$$
We set $Y(w)=f^{-1}(Y(e_w))$.
By \cite[Corollary 2.5]{Lusztig76}, $Y(e_w)\subset Is_1s_2s_3s_2s_1s_2I/I$.
It is easy to check that $s_1s_2s_3s_2s_1s_2s_4s_0$ is a reduced expression.
By Remark \ref{trivial}, we have
$$X_{w}(\tau)=\bigsqcup_{j\in \J/\J\cap P_{\supp_\sigma(e_w)}}jY(w)\quad\text{and}\quad Y(w)\cong Y(e_w)\times \A^1.$$

Let $w=s_0s_1s_2s_0s_1s_0\tau$.
We have 
\begin{align*}
s_0s_1s_2s_0s_1s_0\tau \xrightarrow{s_0} s_1s_2s_0s_1s_0s_4\tau\xrightarrow{s_4} s_1s_2s_0s_1\tau\xrightarrow{s_1}s_2s_0s_1s_3\tau\xrightarrow{s_3}s_3s_2s_0s_3\tau\xrightarrow{s_2} e_w.
\end{align*}
It is straightforward to check that $(s_0s_4)s_1s_2s_0s_1s_0\tau (s_0s_4)^{-1}=s_0s_1s_2s_0s_1\tau=\vp^\mu s_4s_3s_2s_1s_4$ and $(s_0s_4s_1)s_0s_2s_3\tau(s_0s_4s_1)^{-1}=s_0s_1s_2\tau=\vp^\mu s_4s_3s_2s_1s_4s_3s_4$.
Thus by Proposition \ref{DL method prop} (i) and the proof of Lemma \ref{genus 4 empty}, both $X_{s_1s_2s_0s_1s_0\tau}(\tau)$ and $X_{s_0s_2s_3\tau}(\tau)$ are empty.
Let $f\colon X_{w}(\tau)\rightarrow X_{e_w}(\tau)$ be the morphism induced by Proposition \ref{DL method prop}.
By Proposition \ref{spherical}, we have
$$X_{e_w}(\tau)=\bigsqcup_{j\in\J/\J \cap P_{\supp_\sigma(e_w)}}jY(e_w).$$
We set $Y(w)=f^{-1}(Y(e_w))$.
By \cite[Corollary 2.5]{Lusztig76}, $Y(e_w)\subset Is_0s_1s_0s_1s_3s_4s_3s_4I/I$.
It is easy to check that $s_0s_1s_0s_1s_3s_4s_3s_4s_2s_3s_1s_4s_0$ is a reduced expression.
By Remark \ref{trivial}, we have
$$X_{w}(\tau)=\bigsqcup_{j\in \J/\J\cap P_{\supp_\sigma(e_w)}}jY(w)\quad\text{and}\quad Y(w)\cong Y(e_w)\times \A^2.$$

In both cases, $Y(e_w)$ is an irreducible component of $X_{e_w}(\tau)$.
Thus $Y(w)$ is also an irreducible component of $X_w(\tau)$.
This finishes the proof.
\end{proof}

The following theorem is our main result in the case of genus 4:
\begin{theo}
\label{main theo2}
We have 
\begin{align*}
\pi(Y(s_0s_1s_2s_0s_1s_0\tau))\cong Y(s_0s_1\tau)\times \A^2,\quad \pi(Y(s_0s_1s_2s_0\tau))\cong Y(s_1s_2\tau)\times \A^1,\\
\pi(Y(s_0s_1s_0\tau))\cong Y(s_0s_1s_0\tau),\quad \pi(Y(s_0s_1\tau))\cong Y(s_0s_1\tau),\quad \pi(Y(s_0\tau))\cong Y(s_0\tau),
\end{align*}
and
\begin{align*}
X_\mu(\tau)=&\bigsqcup_{j\in \J/\J\cap P_{\{s_0,s_1,s_3,s_4\}}}j\pi(Y(s_0s_1s_2s_0s_1s_0\tau))\sqcup \bigsqcup_{j\in \J/\J\cap P_{\{s_1,s_2,s_3\}}}j\pi(Y(s_0s_1s_2s_0\tau))\\
\sqcup &\bigsqcup_{j\in\J/\J \cap P_{\{s_0,s_1,s_3,s_4\}}}j\pi(Y(s_0s_1s_0\tau))\sqcup \bigsqcup_{j\in\J/\J \cap P_{\{s_0,s_1,s_3,s_4\}}}j\pi(Y(s_0s_1\tau))\\
\sqcup &\bigsqcup_{j\in\J/\J \cap P_{\{s_0,s_2,s_4\}}}j\pi(Y(s_0\tau))\sqcup \bigsqcup_{j\in\J/\J \cap P_{\{s_1,s_2,s_3,s_4\}}}j\{pt\}.
\end{align*}
\end{theo}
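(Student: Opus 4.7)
The plan is to adapt the proof of Theorem \ref{main theo} to the present setting. By the Ekedahl-Oort decomposition and Lemma \ref{genus 4 empty},
$$X_\mu(\tau)=\bigsqcup_{w\in \SAdm(\mu)_0}\pi(X_w(\tau))$$
runs over the six explicit elements listed there, and these pieces are automatically disjoint, so it suffices to describe each $\pi(X_w(\tau))$ separately.

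For the four $\sigma$-Coxeter strata $w\in\{s_0s_1s_0\tau,\, s_0s_1\tau,\, s_0\tau,\, \tau\}$, the second half of Proposition \ref{spherical} yields at once $\pi(Y(w))\cong Y(w)$ together with $\pi(X_w(\tau))=\bigsqcup_{j\in\J/\J\cap P_{\supp_\sigma(w)\cup S_w}} j\pi(Y(w))$. Matching the indexing parahorics against those in the theorem reduces to a direct calculation of $\supp_\sigma(w)\cup S_w$: for $s_0s_1s_0\tau$ and $s_0s_1\tau$ one finds $\{s_0,s_1,s_3,s_4\}$ (with $S_w=\emptyset$); for $\tau$ one has $\supp_\sigma(\tau)=\emptyset$ and $S_\tau=\{s_1,s_2,s_3,s_4\}=S$, so the parahoric is $K$ and $Y(\tau)=\{pt\}$; the only slightly finicky case is $s_0\tau$, where one verifies that $p(s_0\tau)$ normalizes the subgroup $\langle s_2\rangle$ but no larger standard parabolic of $W_0$, giving $S_{s_0\tau}=\{s_2\}$ and hence the parahoric $P_{\{s_0,s_2,s_4\}}$.

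For the remaining two strata $w\in\{s_0s_1s_2s_0\tau,\, s_0s_1s_2s_0s_1s_0\tau\}$, the Iwahori-level picture is already supplied by Lemma \ref{genus 4 Iwahori}: one has $X_w(\tau)=\bigsqcup_j jY(w)$ with $Y(w)\cong Y(e_w)\times \A^{(\ell(w)-2)/2}$. To descend from the Iwahori level to the $K$-level I repeat the argument used at the end of the proof of Theorem \ref{main theo}. By Lemma \ref{closed}, $X_w(\tau)$ is closed in $\pi^{-1}(\pi(X_w(\tau)))$; by \cite[Lemma 5.4]{HNY22} the restriction of $\pi$ to $X_w(\tau)$ is injective onto its image; and $\pi$ is proper, so this restriction is a proper injection, hence a universal homeomorphism. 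This transports the Iwahori-level decomposition to the desired $\pi(X_w(\tau))=\bigsqcup_j j\pi(Y(w))$ with $\pi(Y(w))\cong Y(e_w)\times \A^{(\ell(w)-2)/2}$. I expect the main (mild) obstacle to lie only in the parahoric bookkeeping for the $\sigma$-Coxeter cases above; the heavier combinatorial and geometric input --- non-emptiness of each Iwahori stratum, the iterated $\A^1$-bundle structure, and the properness/injectivity needed for the transport --- is already encoded in Lemmas \ref{genus 4 empty}, \ref{genus 4 Iwahori}, \ref{closed} and Proposition \ref{spherical}.
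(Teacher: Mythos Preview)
There is a genuine gap: the element $s_0s_1s_0\tau$ is \emph{not} $\sigma$-Coxeter. In any reduced expression of $s_0s_1s_0$, the reflection $s_0$ occurs twice, while the $\tau$-orbit $\{s_0,s_4\}$ contributes only one orbit to $\supp_\sigma(s_0s_1s_0\tau)=\{s_0,s_1,s_3,s_4\}$; the definition requires exactly one simple reflection from each orbit. (This is precisely why genus $4$ fails to be of positive Coxeter type, cf.\ Remark~\ref{not positive}.) Consequently the second half of Proposition~\ref{spherical} does not apply to $s_0s_1s_0\tau$, and you have no argument for $\pi(Y(s_0s_1s_0\tau))\cong Y(s_0s_1s_0\tau)$ or for the claimed parahoric index set. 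The first half of Proposition~\ref{spherical} still gives the Iwahori-level decomposition (since $W_{\{s_0,s_1,s_3,s_4\}}$ is finite), but the passage through $\pi$ needs a separate injectivity argument. The paper treats $s_0s_1s_0\tau$ together with the two longer elements: it checks by hand that the only $x\in W_0$ with $xw=wx$ is $x=1$ (equivalently, $x\mu=\mu$ and $x\,p(w)\,x^{-1}=p(w)$ force $x=1$), and then invokes \cite[Lemma~2.1]{Shimada4} to conclude that $X_w(\tau)\to\pi(X_w(\tau))$ is injective; combined with Lemma~\ref{closed} and properness of $\pi$ this yields the universal homeomorphism.

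A related but smaller issue: you cite \cite[Lemma~5.4]{HNY22} for the injectivity of $\pi$ on $X_{s_0s_1s_2s_0\tau}(\tau)$ as well. The paper only invokes that lemma for the top stratum $s_0s_1s_2s_0s_1s_0\tau$ (where $p(w)=s_4s_3s_2s_1$ is a genuine Coxeter element of $W_0$); for $s_0s_1s_2s_0\tau$ one has $p(w)=(1\ 5\ 4\ 2)(3\ 8\ 7\ 6)$, and the paper again falls back on the centralizer computation and \cite[Lemma~2.1]{Shimada4}. Finally, your computation of $S_\tau$ is off: since $\tau s_4\tau^{-1}=s_0\notin S$, no $\tau$-stable subset of $S$ contains $s_4$, so $S_\tau=\{s_1,s_2,s_3\}$, not all of $S$.
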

\begin{proof}
By Lemma \ref{genus 4 empty}, we have
\begin{align*}
X_\mu(\tau)&=\bigsqcup_{w\in \SAdm(\mu)_0}\pi(X_w(\tau))\\
&=\pi(X_{s_0s_1s_2s_0s_1s_0\tau}(\tau))\sqcup \pi(X_{s_0s_1s_2s_0\tau}(\tau))\sqcup \pi(X_{s_0s_1s_0\tau}(\tau))\\
&\quad \sqcup \pi(X_{s_0s_1\tau}(\tau))\sqcup \pi(X_{s_0\tau}(\tau))\sqcup \pi(X_{\tau}(\tau)).
\end{align*}
Note that $S_{s_0s_1\tau}=\emptyset$, $S_{s_0\tau}=\{s_2\}$ and $S_{\tau}=\{s_1,s_2,s_3\}$.
By Proposition \ref{spherical}, we also have 
\begin{align*}
\pi(Y(s_0s_1\tau))\cong Y(s_0s_1\tau),\quad \pi(Y(s_0\tau))\cong Y(s_0\tau),
\end{align*}
and
\begin{align*}
&\pi(X_{s_0s_1\tau}(\tau))=\bigsqcup_{j\in \J/\J\cap P_{\{s_0,s_1,s_3,s_4\}}}j\pi(Y(s_0s_1\tau)),\\
&\pi(X_{s_0\tau}(\tau))=\bigsqcup_{j\in \J/\J\cap P_{\{s_0,s_2,s_4\}}}j\pi(Y(s_0\tau)),\quad \pi(X_{\tau}(\tau))=\bigsqcup_{j\in\J/\J \cap P_{\{s_1,s_2,s_3\}}}j\{pt\}.
\end{align*}

By Lemma \ref{closed}, $X_{w}(\tau)$ is closed in $\pi^{-1}(\pi(X_{w}(\tau)))$ for $w\in \SAdm(\mu)_0$.
By \cite[Lemma 5.4]{HNY22}, the map $X_{w}(\tau)\rightarrow \pi(X_{w}(\tau))$ induced by $\pi$ is injective if $w=s_0s_1s_2s_0s_1s_0\tau$.
If $w=s_0s_1s_2s_0\tau$ (resp.\ $s_0s_1s_0\tau$), let $x\in W_0$ such that $xw=wx$.
Then $x\mu=\mu$ and $xp(w)x^{-1}=p(w)$, i.e., $x\in W_{\{s_1,s_2,s_3\}}$ and $x(1\ 5\ 4\ 2)(3\ 8\ 7\ 6)x^{-1}=(1\ 5\ 4\ 2)(3\ 8\ 7\ 6)$ (resp.\ $x(1\ 5\ 4\ 2\ 8\ 7\ 3)x^{-1}=(1\ 5\ 4\ 2\ 8\ 7\ 3)$).
It easily follows from these conditions that $x=1$.
Thus, by \cite[Lemma 2.1]{Shimada4}, $X_{w}(\tau)\rightarrow \pi(X_{w}(\tau))$ induced by $\pi$ is injective.
Since $\pi$ is proper, it induces a universal homeomorphism between $X_w(\tau)$ and $\pi(X_w(\tau))$ for $w=s_0s_1s_2s_0s_1s_0\tau$, $s_0s_1s_2s_0\tau$ or $s_0s_1s_0\tau$.
Therefore the theorem follows from Lemma \ref{genus 4 Iwahori}.
\end{proof}

\begin{rema}
\label{not positive}
In the case of genus $4$, $(G,\mu)$ is not of positive Coxeter type.
Indeed, $s_0s_1s_0\tau\xrightarrow{s_0} s_1s_0s_4\tau \xrightarrow{s_4} s_1\tau$.
By Theorem \ref{empty}, both $X_{s_1\tau}(\tau)$ and $X_{s_1s_0\tau}(\tau)$ are non-empty.
Thus $s_0s_1s_0\tau$ does not have positive Coxeter part by Theorem \ref{simple} (or \cite[Theorem A]{SSY23}, which is a theorem on reduction trees).
\end{rema}

\section{The Case of Genus $\geq 5$}
\label{genus 5}
We put $n \geq 5$, $G = \GSp_{2n}$, and $\mu =(1^{(n)}, 0^{(n)}) \in X_{\ast} (T)$.
Let $W_{0}$ be the subgroup of the symmetric group $S_{2n}$ generated by 
$s_{i} = (i\ i+1) (2n-i\ 2n+1-i)$ for $1 \leq i \leq n$, and $W_{a}$ the subgroup generated by $s_{1}, \ldots, s_{n}$ and the simple affine reflection
 \[
 s_{0} = \varpi^{(1^{(1)}, 0^{(2n-2)}, (-1)^{(1)})} (1,2n) \in W_{a},
 \]
  We put $\tau := \varpi^{\mu}(s_{n} s_{n-1} \cdots s_{1}) (s_{n} s_{n-1} \cdots s_{2}) \cdots (s_{n}s_{n-1}) (s_{n})\in \Omega$.
  We consider the element
  \[
w:= s_0 s_1 s_2 \cdots s_{n-2} s_0 s_1 s_0 \tau \in \tw.
  \]
  Then it is straightforward to check that
  \[
   w \in \SAdm (\mu).
  \]
  Moreover, we have
  \[
w \xrightarrow{s_0} s_1 s_2 \cdots s_{n-2} s_0 s_1 s_0 s_n \tau
\xrightarrow{s_{n}}
s_1 s_2 \cdots s_{n-2} s_0 s_1 \tau
  \]
  Therefore, by Proposition \ref{DL method prop}, there exists a (possibly empty) locus of $X_{w} (\tau)$ which admits a morphism onto $X_{s_{1} s_{2} \cdots s_{n-2} s_{0} s_{1} s_{0} \tau} (\tau)$ which is a composition of a universal homeomorphism and a $\mathbb{G}_{m}$-bundle.
  Furthermore, we have
  \begin{eqnarray*}
&& s_{1} s_{2} \cdots s_{n-2} s_{0} s_{1} s_{0} \tau \\
  &\xrightarrow{s_2} &
  s_2 s_1 s_2 \cdots s_{n-3} s_0 s_1 s_0 \tau \\
  &\xrightarrow{s_3}&
  s_3 s_2 s_1 s_2 \cdots s_{n-3} s_0 s_1 s_0 \tau  \\
  &\cdots& \\
  &\xrightarrow{s_{n-3}}&
  s_{n-3} s_{n-4} \cdots s_{1} s_{2}
s_{0} s_{1} s_{0} \tau \\
&=& s_{n-3} s_{n-4} \cdots s_{3} s_{1} s_{2} s_{1} s_{0} s_{1} s_{0} \tau\\
&=&  s_{n-3} s_{n-4} \cdots s_{3} s_{1} s_{2} s_{0} s_{1} s_{0} s_{1} \tau \\
&\xrightarrow{s_{1}}& s_{n-3} s_{n-4} \cdots s_{3} s_{2} s_{0} s_{1} s_{0} s_{1} s_{n-1} \tau\\
&\xrightarrow{s_{n-1}}& s_{n-3} s_{n-4} \cdots s_{3} s_{2} s_{0} s_{1} s_{0} \tau\\
&\xrightarrow{s_{0}}& s_{n-3} s_{n-4} \cdots s_{3} s_{2} s_{1} s_{0} s_{n} \tau \\
&\xrightarrow{s_{n}}& s_{n-3} s_{n-4} \cdots s_{3} s_{2} s_{1} \tau,
  \end{eqnarray*}
  and by Proposition \ref{DL method prop} again, there exists a locus of $X_{s_{1} s_{2} \cdots s_{n-2} s_{0} s_{1} s_{0} \tau} (\tau)$ which admits a composition of universal homeomorphisms and $\mathbb{A}^{1}$-bundles onto $X_{s_{2} s_{1} \tau} (\tau)$.
   Since we have 
   \[
   \supp_{\sigma}(s_{n-3} s_{n-4} \cdots s_{3} s_{2} s_{1} \tau,) = (s_{1}, s_{2}, s_{3}, s_{n-1}) \neq \tS,
   \]   
   $X_{s_{n-3} s_{n-4} \cdots s_{3} s_{2} s_{1} \tau,} (\tau)$ is non-empty.
   This example shows that for $n \geq 5$, there exists a $w \in \SAdm (\mu)$ such that some reduction tree of $X_{w} (\tau)$ contains a non-trivial $\G_{m}$-fibration. That means, the analogue of Theorem \ref{main theo2} does not hold in $n\geq 5.$
   In particular, for any $n \geq 4$, $(G,\mu)$ is not of positive Coxeter type (cf.\ Remark \ref{not positive} and Theorem \ref{simple}).

\bibliographystyle{myamsplain}
\bibliography{reference}

\providecommand{\bysame}{\leavevmode\hbox to3em{\hrulefill}\thinspace}
\providecommand{\MR}{\relax\ifhmode\unskip\space\fi MR }
\providecommand{\MRhref}[2]{%
  \href{http://www.ams.org/mathscinet-getitem?mr=#1}{#2}
}
\providecommand{\href}[2]{#2}
\begin{thebibliography}{10}

\bibitem{AB08}
P.~Abramenko and K.~S. Brown, \emph{Buildings}, Graduate Texts in Mathematics,
  vol. 248, Springer, New York, 2008, Theory and applications. \MR{2439729}

\bibitem{BS17}
B.~Bhatt and P.~Scholze, \emph{Projectivity of the {W}itt vector affine
  {G}rassmannian}, Invent. Math. \textbf{209} (2017), no.~2, 329--423.
  \MR{3674218}

\bibitem{CV18}
M.~Chen and E.~Viehmann, \emph{Affine {D}eligne-{L}usztig varieties and the
  action of {$J$}}, J. Algebraic Geom. \textbf{27} (2018), no.~2, 273--304.
  \MR{3764277}

\bibitem{Gortz19}
U.~G\"{o}rtz, \emph{Stratifications of affine {D}eligne-{L}usztig varieties},
  Trans. Amer. Math. Soc. \textbf{372} (2019), no.~7, 4675--4699. \MR{4009395}

\bibitem{GH10}
U.~G\"{o}rtz and X.~He, \emph{Dimensions of affine {D}eligne-{L}usztig
  varieties in affine flag varieties}, Doc. Math. \textbf{15} (2010),
  1009--1028. \MR{2745691}

\bibitem{GH15}
\bysame, \emph{Basic loci of {C}oxeter type in {S}himura varieties}, Camb. J.
  Math. \textbf{3} (2015), no.~3, 323--353. \MR{3393024}

\bibitem{GHN15}
U.~G\"{o}rtz, X.~He, and S.~Nie, \emph{{$\bf P$}-alcoves and nonemptiness of
  affine {D}eligne-{L}usztig varieties}, Ann. Sci. \'{E}c. Norm. Sup\'{e}r. (4)
  \textbf{48} (2015), no.~3, 647--665. \MR{3377055}

\bibitem{GHR20}
U.~G\"{o}rtz, X.~He, and M.~Rapoport, \emph{Extremal cases of {R}apoport-{Z}ink
  spaces}, Journal of the Institute of Mathematics of Jussieu (2020), 1--56.

\bibitem{GC10}
U.~G\"{o}rtz and C.-F. Yu, \emph{Supersingular {K}ottwitz-{R}apoport strata and
  {D}eligne-{L}usztig varieties}, J. Inst. Math. Jussieu \textbf{9} (2010),
  no.~2, 357--390. \MR{2602029}

\bibitem{GC12}
\bysame, \emph{The supersingular locus in {S}iegel modular varieties with
  {I}wahori level structure}, Math. Ann. \textbf{353} (2012), no.~2, 465--498.
  \MR{2915544}

\bibitem{HV12}
U.~Hartl and E.~Viehmann, \emph{Foliations in deformation spaces of local
  {$G$}-shtukas}, Adv. Math. \textbf{229} (2012), no.~1, 54--78. \MR{2854170}

\bibitem{He07}
X.~He, \emph{Minimal length elements in some double cosets of {C}oxeter
  groups}, Adv. Math. \textbf{215} (2007), no.~2, 469--503. \MR{2355597}

\bibitem{HNY22}
X.~He, S.~Nie, and Q.~Yu, \emph{Affine {D}eligne--{L}usztig varieties with
  finite {C}oxeter parts}, Algebra Number Theory \textbf{18} (2024), no.~9,
  1681--1714. \MR{4856606}

\bibitem{HTX17}
D.~Helm, Y.~Tian, and L.~Xiao, \emph{Tate cycles on some unitary {S}himura
  varieties mod}, Algebra Number Theory \textbf{11} (2017), no.~10, 2213--2288.
  \MR{3744356}

\bibitem{Kaiser97}
C.~Kaiser, \emph{Ein getwistetes fundamentales lemma f\"{u}r die
  {$\mathrm{GSp}_4$}}, Bonner Mathematische Schriften 303, Universit\"{a}t
  Bonn, Mathematisches Institut (1997).

\bibitem{KO87}
T.~Katsura and F.~Oort, \emph{Families of supersingular abelian surfaces},
  Compositio Math. \textbf{62} (1987), no.~2, 107--167. \MR{898731}

\bibitem{KR00}
R.~Kottwitz and M.~Rapoport, \emph{Minuscule alcoves for {${\rm GL}_n$} and
  {$G{\rm Sp}_{2n}$}}, Manuscripta Math. \textbf{102} (2000), no.~4, 403--428.
  \MR{1785323}

\bibitem{Kottwitz85}
R.~E. Kottwitz, \emph{Isocrystals with additional structure}, Compositio Math.
  \textbf{56} (1985), no.~2, 201--220. \MR{809866}

\bibitem{KR11}
S.~Kudla and M.~Rapoport, \emph{Special cycles on unitary {S}himura varieties
  {I}. {U}nramified local theory}, Invent. Math. \textbf{184} (2011), no.~3,
  629--682. \MR{2800697}

\bibitem{KR99}
S.~S. Kudla and M.~Rapoport, \emph{Arithmetic {H}irzebruch-{Z}agier cycles}, J.
  Reine Angew. Math. \textbf{515} (1999), 155--244. \MR{1717613}

\bibitem{Lim23}
D.~G. Lim, \emph{Nonemptiness of single affine {D}eligne-{L}usztig varieties},
  arXiv:2302.04976 (2023).

\bibitem{Lusztig76}
G.~Lusztig, \emph{{C}oxeter orbits and eigenspaces of {F}robenius}, Invent.
  Math. \textbf{38} (1976/77), no.~2, 101--159. \MR{453885}

\bibitem{Macdonald03}
I.~G. Macdonald, \emph{Affine {H}ecke algebras and orthogonal polynomials},
  Cambridge Tracts in Mathematics, vol. 157, Cambridge University Press,
  Cambridge, 2003. \MR{1976581}

\bibitem{PR08}
G.~Pappas and M.~Rapoport, \emph{Twisted loop groups and their affine flag
  varieties}, Adv. Math. \textbf{219} (2008), no.~1, 118--198, With an appendix
  by T. Haines and Rapoport. \MR{2435422}

\bibitem{Richartz98}
M.~Richartz, \emph{Klassifikation von selbstdualen dieudonn\'{e}gittern in
  einem dreidimensionalen polarisierten supersingul\"{a}ren {I}sokristall},
  Bonner Mathematische Schriften 311, Universit\"{a}t Bonn, Mathematisches
  Institut (1998).

\bibitem{Schremmer22}
F.~Schremmer, \emph{Generic {N}ewton points and cordial elements},
  arXiv:2205.02039 (2022).

\bibitem{Schremmer23}
F.~Schremmer, \emph{Newton strata in {L}evi subgroups}, Manuscripta Math.
  \textbf{175} (2024), no.~1-2, 513--519. \MR{4790570}

\bibitem{SSY23}
F.~Schremmer, R.~Shimada, and Q.~Yu, \emph{Affine {D}eligne-{L}usztig varieties
  of positive {C}oxeter type}, arXiv:2312.02630 (2023).

\bibitem{Shimada4}
R.~Shimada, \emph{The {E}kedahl-{O}ort stratification and the semi-module
  stratification}, arXiv:2309.03371, to appear in Canad. J. Math.

\bibitem{Shimada5}
\bysame, \emph{Basic loci of positive {C}oxeter type for {$GL_n$}},
  arXiv:2402.13216 (2024).

\bibitem{Shimada6}
\bysame, \emph{On the supersingular locus of the $\mathrm{GU}(2,n-2)$ {S}himura
  variety}, arXiv:2410.05110 (2024).

\bibitem{Steinberg68}
R.~Steinberg, \emph{Endomorphisms of linear algebraic groups}, Memoirs of the
  American Mathematical Society, vol. No. 80, American Mathematical Society,
  Providence, RI, 1968. \MR{230728}

\bibitem{Takaya22}
Y.~Takaya, \emph{Equidimensionality of affine {D}eligne-{L}usztig varieties in
  mixed characteristic}, Adv. Math. \textbf{465} (2025), Paper no. 110153.

\bibitem{TX19}
Y.~Tian and L.~Xiao, \emph{Tate cycles on some quaternionic {S}himura varieties
  {${\rm mod}\,p$}}, Duke Math. J. \textbf{168} (2019), no.~9, 1551--1639.
  \MR{3961211}

\bibitem{Trentin23}
S.~Trentin, \emph{On the {R}apoport-{Z}ink space for $\mathrm{GU}(2, 4)$ over a
  ramified prime}, arXiv:2309.11290 (2023).

\bibitem{VW11}
I.~Vollaard and T.~Wedhorn, \emph{The supersingular locus of the {S}himura
  variety of {${\rm GU}(1,n-1)$} {II}}, Invent. Math. \textbf{184} (2011),
  no.~3, 591--627. \MR{2800696}

\bibitem{Wang21}
H.~Wang, \emph{Deligne-{L}usztig varieties and basic {EKOR} strata}, Canad.
  Math. Bull. \textbf{64} (2021), no.~2, 349--367. \MR{4273205}

\bibitem{Zhang12}
W.~Zhang, \emph{On arithmetic fundamental lemmas}, Invent. Math. \textbf{188}
  (2012), no.~1, 197--252. \MR{2897697}

\bibitem{Zhu17}
X.~Zhu, \emph{Affine {G}rassmannians and the geometric {S}atake in mixed
  characteristic}, Ann. of Math. (2) \textbf{185} (2017), no.~2, 403--492.
  \MR{3612002}

\end{thebibliography}
\end{document}